\numberwithin{equation}{section}
\renewcommand*{\backref}[1]{}  
\renewcommand*{\backrefalt}[4]{
  \ifcase #1 %
  \relax
  \or
(Cited page~#2.)%
  \else
(Cited pages~#2.)%
  \fi}
\newtheorem{theorem}{Theorem}[section]
\newaliascnt{cor}{theorem}
\newtheorem{cor}[cor]{Corollary}
\newaliascnt{prop}{theorem}
\newtheorem{prop}[prop]{Proposition}
\newaliascnt{lemma}{theorem}
\newtheorem{lemma}[lemma]{Lemma}
\theoremstyle{definition}
\newaliascnt{defi}{theorem}
\newtheorem{defi}[defi]{Definition}
\newaliascnt{example}{theorem}
\newtheorem{example}[example]{Example}
\theoremstyle{remark}
\newaliascnt{remark}{theorem}
\newtheorem{remark}[remark]{Remark}
\newcommand{\ie}{\textit{i.e.} \/}
\newcommand{\numberset}[1]{\mathbb{#1}}
\newcommand{\nat}{\numberset{N}}
\newcommand{\intg}{\numberset{Z}}
\newcommand{\complex}{\numberset{C}}
\newcommand{\A}{\mathcal{A}}
\DeclareMathOperator{\pr}{\textsf{pr}}
\DeclareMathOperator{\Ann}{Ann}
\title{Brackets with $(\tau,\sigma)$-derivations and $(p,q)$-deformations of Witt and Virasoro algebras}
\author[1]{Olivier Elchinger}
\author[2]{Karl Lundengård}
\author[1]{Abdenacer Makhlouf}
\author[2]{Sergei D.~Silvestrov}
\affil[1]{
Laboratoire de Math\'{e}matiques, Informatique  et Applications\\

Universit\'{e} de Haute Alsace, Mulhouse, France
}
\affil[2]{
Mathematics and Applied Mathematics\\

Mälardalens Högskola, Västerås, Sweden
}
\begin{document}

\maketitle


\begin{abstract}
The aim of this paper is to  study some brackets defined on $(\tau,\sigma)$-derivations satisfying quasi-Lie identities. Moreover, we provide examples of $(p,q)$-deformations of  Witt and  Virasoro algebras as well as  $\mathfrak{sl}(2)$ algebra. These constructions generalize the results obtained by Hartwig, Larsson and Silvestrov on $\sigma$-derivations, arising in connection with  discretizations and deformations of algebras of vector fields. 

\end{abstract}

\noindent {\bf Keywords} : $(\tau,\sigma)$-derivation,quasi-Lie algebra, Hom-Lie algebra, $(p,q)$-deformation, Witt algebra, Virasoro algebra, $\mathfrak{sl}(2)$ algebra.\\

\noindent {\bf 2010 AMS Subject Classification}:  17B37, 17A30.

\section*{Introduction}

Hom-Lie algebras and related algebraic structures have recently become rather popular, due in part to the prospect of having a general framework in which one can produce many types of natural deformations of (Lie) algebras, in particular $q$-deformations which are of interest to both mathematicians and physicists. 

The area of quantum deformations (or $q$-deformations) of Lie algebras began a period of rapid expansion around 1985 when Drinfeld and Jimbo  independently considered deformations of $U(\mathfrak{g})$, the universal enveloping algebra of a semisimple Lie algebra $\mathfrak{g}$, motivated, among other things, by their applications to quantum integrable systems, the Yang-Baxter equation and quantum inverse scattering methods. This method developed by Faddeev and his
collaborators gave rise to a great interest in quantum groups and Hopf algebras. Since then
several other versions of ($q$-)deformed Lie algebras have appeared, especially in physical contexts
such as string theory. The main objects for these deformations were infinite-dimensional
algebras, primarily the Heisenberg algebras (oscillator algebras) and the Virasoro algebra, see
\cite{AlGau89,AiSa91,CEP90,CILPP91,CKL90,CPP90,CZ90,DamKul91,Das92,Hu99,K92,Liu92} and the references therein. The main tools are skew-derivations or $\sigma$-derivations, which are generalized derivations twisting the Leibniz rule by means of a linear map.

In a series of papers \cite{HLS06,LS05,LS07} the authors have developed a new quasi-deformation scheme leading from Lie algebras to a broader class of quasi-Lie algebras and subclasses of quasi-Hom-Lie algebras and Hom-Lie algebras. These
classes of algebras are tailored in a way suitable for simultaneous treatment of the Lie algebras, Lie superalgebras, the color Lie algebras and the deformations arising in connection with twisted, discretized or deformed derivatives and corresponding generalizations, discretizations and deformations of vector fields and differential calculus. Indeed, in \cite{HLS06,LS05,LS05b,LS07}, it has been shown that the class of quasi-Hom-Lie algebras contains as a subclass on the one hand the color Lie algebras and in particular Lie superalgebras and Lie
algebras, and on the other hand various known and new single and multi-parameter families of algebras obtained using twisted derivations and constituting deformations and quasi-deformations of universal enveloping algebras of Lie and color Lie algebras and of algebras of vector-fields.

Quasi-Lie algebras are nonassociative algebras for which the skew-symmetry and the Jacobi identity are twisted by several deforming twisting maps and also the Jacobi identity in quasi-Lie and quasi-Hom-Lie algebras in general contains 6 twisted triple bracket terms. In the subclass of Hom-Lie algebras skew-symmetry is untwisted. Thus Hom-Lie algebras, in the contrast with quasi-Hom-Lie algebras, do not contain super or color Lie algebras that are not Lie algebras. The Jacobi identity in Hom-Lie algebras is however twisted by a single linear map and contains 3 terms as in Lie algebras and color Lie algebras (6 terms in quasi-Lie Jacobi identity are combined pairwise).

This kind of algebraic structures introduced by Hartwig, Larsson and Silvestrov were later further extended to other type of nonassociative algebras (associative, Lie-admissible, Leibniz, Jordan, Alternative, Malcev, $n$-ary algebras \ldots) as well as  their dual and graded versions, see \cite{AM10,BM14,CG11,EM14,FGS09,Gohr10,Mak:Almeria,MS09,MP14,She12,Yau10,Yau-homology,Yau09b,Yau11,AAM14,AMS14,MY14,AMM11,AMS10}.
The main feature of this type of algebras, called Hom-algebras, is that the usual  identities are twisted by one or several deforming twisting maps, which lead to many
interesting results. \\

Our purpose in this paper is to construct internal brackets based $(\tau,\sigma)$-derivations leading to quasi-Lie algebras. This work  generalizes the constructions  in \cite{HLS06} which deals with $\sigma$-derivations, and were used to obtain $q$-deformations of  Witt and Virasoro algebras. We recover the results on  $\sigma$-derivations  when $\tau = id$. Moreover, we provide examples of $(p,q)$-deformations of Witt, Virasoro and $\mathfrak{sl}(2)$ algebras.

In \autoref{Sec:preliminaries}, we give the definition and some properties of $(\tau,\sigma)$-derivations, in particular the fact that they form a free rank-one module when $\tau \neq \sigma$. We also recall the definitions of quasi-Lie algebras, Hom-Lie algebras and their extensions. In \autoref{Sec:Bracket-st-der}, we define quasi-Lie and Hom-Lie algebras (\autoref{Thm:Bracket-Gen-Der} and \autoref{Thm:Forced-Bracket}), extending the main theorem of \cite{HLS06} to the case of $(\tau,\sigma)$-derivations. We provide in  \autoref{Sec:Examples} applications of  these theorems  to obtain $(p,q)$-deformations of Witt, Virasoro and $\mathfrak{sl}(2)$ algebras. We study the relations between these deformations and $q$-deformations obtained in \cite{HLS06,LS07}. In  \autoref{Sec:summarized-diagrams}, we  summarize the various deformations for Witt and $\mathfrak{sl}(2)$ algebras and their relationships using diagrams. Finally, in \autoref{Sec:another-ex}, we provide other examples of internal brackets based  on $(\tau,\sigma)$-derivations.

\section{Preliminaries} \label{Sec:preliminaries}

In this first preliminary section, we recall the definitions and some properties of $(\tau,\sigma)$-derivations and Hom-Lie algebras. To lighten the computations, the composition of maps symbol $\circ$ is sometimes omitted.

\subsection{\texorpdfstring{Generalities on $(\tau,\sigma)$-derivations}{Generalities on (tau,sigma)-derivations}}

Let $R$ be a ring and $\A$ be an associative $R$-algebra. We consider two algebra endomorphisms $\tau$ and $\sigma$ of $A$.

\begin{defi}
A \emph{$(\tau,\sigma)$-derivation} $D : \A \to \A$ is an $R$-linear map such that the following generalized Leibniz identity
\begin{equation}
D(ab)=D(a)\tau(b)+\sigma(a)D(b)
\end{equation}
is satisfied, with $a,b \in \A$. The set of all $(\tau,\sigma)$-derivation on $\A$ is denoted by $\mathcal{D}_{\tau,\sigma}(\A)$.
\end{defi}

\begin{remark}
Suppose that the algebra $\A$ is commutative. Then a $(\tau,\sigma)$-derivation is also a $(\sigma,\tau)$-derivation because
\[
D(ab) = D(ba) = D(b)\tau(a) + \sigma(b)D(a) = D(a)\sigma(b) + \tau(a)D(b).
\]
A $(id,\sigma)$-derivation (or $(\sigma,id)$-derivation if the algebra $A$ is commutative) will simply be called \emph{$\sigma$-derivation}. We recover classical derivations when $\sigma = \tau = id$.
\end{remark}

\begin{prop}
Let $\A$ be an associative $R$-algebra and $\tau,\tau',\sigma,\sigma'$ algebra endomorphisms of $\A$. Let $D$ be a $(\tau,\sigma)$-derivation and $D'$ a $(\tau',\sigma')$-derivation. If $\tau$ and $\tau'$ commute, $\sigma$ and $\sigma'$ commute, and $D$ commutes with $\tau'$ and $\sigma'$, and $D'$ commutes with $\tau$ and $\sigma$, then $[D,D'] \coloneqq D \circ D' - D' \circ D$ is a $(\tau \tau',\sigma \sigma')$-derivation.
\end{prop}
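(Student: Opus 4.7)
The plan is a direct computation: expand $[D,D'](ab) = D(D'(ab)) - D'(D(ab))$ using the twisted Leibniz rules for $D$ and $D'$, and then verify that the cross terms cancel by virtue of the various commutation hypotheses.

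Concretely, I would first apply the $(\tau',\sigma')$-Leibniz rule to $D'(ab)$ and then the $(\tau,\sigma)$-Leibniz rule to $D$ of each piece. This yields
\[
D(D'(ab)) = D(D'(a))\,\tau\tau'(b) + \sigma(D'(a))\,D(\tau'(b)) + D(\sigma'(a))\,\tau(D'(b)) + \sigma\sigma'(a)\,D(D'(b)),
\]
and symmetrically for $D'(D(ab))$ with the roles of $(\tau,\sigma,D)$ and $(\tau',\sigma',D')$ swapped. Subtracting, the two ``outer'' terms combine to give
\[
[D,D'](a)\,\tau\tau'(b) + \sigma\sigma'(a)\,[D,D'](b),
\]
using that $\tau\tau' = \tau'\tau$ and $\sigma\sigma' = \sigma'\sigma$.

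The real content is showing that the four ``mixed'' middle terms cancel. Here I would use the hypotheses $D\circ\tau' = \tau'\circ D$, $D\circ\sigma' = \sigma'\circ D$, $D'\circ\tau = \tau\circ D'$ and $D'\circ\sigma = \sigma\circ D'$ to rewrite each middle term so that the inner map (applied to the argument) lands on top. For instance, $\sigma(D'(a))\,D(\tau'(b)) = \sigma(D'(a))\,\tau'(D(b))$ and $D(\sigma'(a))\,\tau(D'(b)) = \sigma'(D(a))\,\tau(D'(b))$, and similarly on the other side. After this rewriting the four terms pair up to zero.

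The bookkeeping of the middle terms is the main (only) obstacle; each of the four commutation assumptions is used exactly once, which is a good consistency check. Nothing further is needed, since $R$-linearity of $[D,D']$ follows immediately from $R$-linearity of $D$ and $D'$.
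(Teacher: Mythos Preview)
Your proof is correct and follows exactly the same route as the paper: expand $(D\circ D')(ab)$ and $(D'\circ D)(ab)$ via the twisted Leibniz rules, combine the outer terms using $\tau\tau'=\tau'\tau$ and $\sigma\sigma'=\sigma'\sigma$, and cancel the four middle terms with the commutation hypotheses. The paper simply asserts that ``the four middle-terms cancel by using the hypothesis,'' while you spell out the pairings more explicitly; otherwise the arguments are identical.
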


\begin{proof}
For $a,b \in \A$, we have
\begin{gather*}
\begin{aligned}
(D \circ D')(ab) &= D(D'(a)\tau'(b) + \sigma'(a)D'(b)) \\
&= DD'(a)\tau\tau'(b) + \sigma D'(a) D \tau'(b) + D \sigma'(a) \tau D'(b) + \sigma\sigma'(a)DD'(b)
\end{aligned}
\intertext{and}
\begin{aligned}
(D' \circ D)(ab) &= D'(D(a)\tau(b) + \sigma(a)D(b)) \\
&= D'D(a)\tau'\tau(b) + \sigma' D(a) D' \tau(b) + D' \sigma(a) \tau' D(b) + \sigma'\sigma(a)D'D(b),
\end{aligned}
\end{gather*}
substracting the two expressions, the four middle-terms cancel by using the hypothesis to give
\[
[D,D'](ab) = [D,D'](a) \tau\tau'(b) + \sigma\sigma'(a)[D,D'](b)
\]
which shows that $[D,D'] = D \circ D' - D' \circ D$ is a $(\tau \tau',\sigma \sigma')$-derivation.
\end{proof}

Denoting by
\[
\mathcal{D}(\A) = \bigcup_{\tau,\sigma\ \in\ Z(\mathcal{L}_R(\A))} \mathcal{D}_{\tau,\sigma}(\A) \quad \subset \ \mathcal{L}_R(\A)
\]
the set of all $(\tau,\sigma)$-derivations, for all endomorphisms $\tau,\sigma$ that commute with each other and with all linear maps of $\A$, the preceding proposition gives a structure of Lie algebra on this subset of commuting $R$-linear maps on $\A$.

\begin{cor}
Endowed with the bracket $[D,D'] \coloneqq  D \circ D' - D' \circ D$, the subspace $\mathcal{D}(\A)$ is a Lie algebra.
\end{cor}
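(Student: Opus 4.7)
The plan is to verify the three Lie-algebra axioms for $[D,D'] \coloneqq D\circ D' - D'\circ D$ on $\mathcal{D}(\A)$, with virtually all of the substance packaged into the preceding proposition. First, I observe that the ambient $R$-module $\mathcal{L}_R(\A)$ is an associative $R$-algebra under composition, and the commutator on any associative algebra is automatically $R$-bilinear, skew-symmetric, and satisfies the Jacobi identity. Thus skew-symmetry and the Jacobi identity come for free on $\mathcal{D}(\A)$ as soon as the bracket restricts to it.

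The only genuine thing to check is closure: for $D \in \mathcal{D}_{\tau,\sigma}(\A)$ and $D' \in \mathcal{D}_{\tau',\sigma'}(\A)$ with $\tau,\sigma,\tau',\sigma' \in Z(\mathcal{L}_R(\A))$, one needs $[D,D'] \in \mathcal{D}(\A)$. Here the preceding proposition applies directly: the centrality hypotheses on $\tau,\tau',\sigma,\sigma'$ force \emph{all} the required commutations ($\tau$ with $\tau'$, $\sigma$ with $\sigma'$, and each of $D,D'$ with each of the four twisting maps) to hold automatically, so the proposition yields that $[D,D']$ is a $(\tau\tau',\sigma\sigma')$-derivation. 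Since the composition of two central endomorphisms is again central, $\tau\tau'$ and $\sigma\sigma'$ lie in $Z(\mathcal{L}_R(\A))$, and $[D,D']$ belongs to $\mathcal{D}(\A)$.

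There is no real obstacle in the corollary itself; it is essentially a repackaging of the proposition. The only point worth a line of comment is what the word ``subspace'' actually means, since the literal union $\bigcup_{\tau,\sigma} \mathcal{D}_{\tau,\sigma}(\A)$ need not be closed under addition: summing derivations with distinct twisting pairs produces an $R$-linear map that is not itself of the form of a single $(\tau'',\sigma'')$-derivation. I would read $\mathcal{D}(\A)$ as the $R$-linear span of this union inside $\mathcal{L}_R(\A)$; the closure argument above then extends bilinearly to all finite sums, and the remaining axioms are simply inherited from the associative structure on $\mathcal{L}_R(\A)$.
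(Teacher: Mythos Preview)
Your proof is correct and follows the same route as the paper, which in fact offers no explicit proof at all: the corollary is simply stated as an immediate consequence of the preceding proposition, and your argument makes precisely that explicit (closure from the proposition, the remaining axioms inherited from the associative algebra $\mathcal{L}_R(\A)$). Your remark about the literal union $\bigcup_{\tau,\sigma}\mathcal{D}_{\tau,\sigma}(\A)$ not being closed under addition is a valid observation that the paper glosses over; reading $\mathcal{D}(\A)$ as the $R$-linear span, as you do, is the natural fix.
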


We fix now for the next sections $\tau$ and $\sigma$ endomorphisms of $\A$, and will work only on $(\tau,\sigma)$-derivations $\mathcal{D}_{\tau,\sigma}(\A)$.
\begin{lemma} \label{Lem:sigma-tau-difference}
Let $\A$ be an associative $R$-algebra and $\tau,\sigma$ two different algebra endomorphisms of $\A$. For an element $c$ in the center of $\A$, define an $R$-linear map $D : \A \to \A$ by
\begin{equation}
D(f) = c (\tau(f) - \sigma(f)).
\end{equation}
Then $D$ is a $(\tau,\sigma)$-derivation.
\end{lemma}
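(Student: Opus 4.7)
The plan is a direct verification: expand $D(fg)$ using the definition and the fact that $\tau,\sigma$ are algebra endomorphisms, then expand $D(f)\tau(g)+\sigma(f)D(g)$ the same way, and check the two expressions agree. The only subtle point is that we need $c$ to be able to slide past $\sigma(f)$, which is exactly why the hypothesis $c \in Z(\A)$ is there.

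First I would note that $R$-linearity of $D$ is immediate, since $\tau$ and $\sigma$ are $R$-linear and left multiplication by $c \in \A$ is $R$-linear. Then for $f,g \in \A$ I would compute
\[
D(fg) = c\bigl(\tau(fg)-\sigma(fg)\bigr) = c\tau(f)\tau(g) - c\sigma(f)\sigma(g),
\]
using that $\tau$ and $\sigma$ are algebra morphisms.

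On the other side, I would expand
\[
D(f)\tau(g) + \sigma(f)D(g) = c\bigl(\tau(f)-\sigma(f)\bigr)\tau(g) + \sigma(f)\,c\bigl(\tau(g)-\sigma(g)\bigr).
\]
At this point I would invoke $c \in Z(\A)$ to rewrite $\sigma(f)\,c = c\,\sigma(f)$, so the right-hand side becomes
\[
c\tau(f)\tau(g) - c\sigma(f)\tau(g) + c\sigma(f)\tau(g) - c\sigma(f)\sigma(g),
\]
and the two middle terms telescope to leave $c\tau(f)\tau(g) - c\sigma(f)\sigma(g)$, matching $D(fg)$.

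There is no real obstacle; the only thing worth being explicit about is the use of centrality, which is the unique place where the hypothesis on $c$ is used. One might remark that if $c$ were not central, the terms $c\sigma(f)\tau(g)$ and $\sigma(f)c\tau(g)$ would differ, and the telescoping would fail, so the hypothesis is sharp for this construction.
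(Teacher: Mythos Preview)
Your proof is correct and follows essentially the same direct verification as the paper's: both expand $D(fg)$ via the multiplicativity of $\tau$ and $\sigma$, insert and cancel the cross term $c\sigma(f)\tau(g)$, and use centrality of $c$ to commute it past $\sigma(f)$. The paper is terser (it goes from $D(fg)$ to $D(f)\tau(g)+\sigma(f)D(g)$ in one line), while you expand both sides separately and match them, but the substance is identical.
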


\begin{proof}
We have
\begin{align*}
D(fg) &= c (\tau(fg) - \sigma(fg)) = c (\tau(f) - \sigma(f)) \tau(g) + \sigma(f) (\tau(g) - \sigma(g))c \\
& = D(f)\tau(g) + \sigma(f) D(g).
\end{align*}
\end{proof}

\begin{prop}
Let $\A$ be an associative $R$-algebra and $\tau,\sigma$ two different algebra endomorphisms of $\A$. Let $\alpha$ be an algebra endomorphism of $\A$ and $D$ a $(\tau,\sigma)$-derivation. Then $\alpha \circ D$ is a $(\alpha \circ \tau,\alpha \circ \sigma)$-derivation and $D \circ \alpha$ is a $(\tau \circ \alpha,\sigma \circ \alpha)$-derivation.
\end{prop}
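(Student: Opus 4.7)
The plan is to verify each claim by a direct computation, unpacking the compositions against the $(\tau,\sigma)$-Leibniz identity satisfied by $D$, and then invoking the fact that $\alpha$ is an algebra endomorphism (so that it is multiplicative) to push $\alpha$ through products. The hypothesis $\tau \neq \sigma$ plays no role in the computation itself; it appears to be inherited from the setting fixed before the statement.

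First I would handle $\alpha \circ D$. Starting from $(\alpha \circ D)(ab)$, I apply the $(\tau,\sigma)$-Leibniz identity for $D$ inside the outer $\alpha$, obtaining $\alpha\bigl(D(a)\tau(b) + \sigma(a)D(b)\bigr)$. Using that $\alpha$ is an algebra homomorphism, I distribute $\alpha$ over the sum and over each product to get
\[
\alpha(D(a))\,\alpha(\tau(b)) + \alpha(\sigma(a))\,\alpha(D(b)),
\]
which rewrites as $(\alpha\circ D)(a)\,(\alpha\circ\tau)(b) + (\alpha\circ\sigma)(a)\,(\alpha\circ D)(b)$, giving exactly the $(\alpha\circ\tau,\alpha\circ\sigma)$-Leibniz identity. $R$-linearity of $\alpha\circ D$ is immediate from $R$-linearity of both factors.

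Next I would handle $D\circ\alpha$ symmetrically. Starting from $(D\circ\alpha)(ab) = D(\alpha(a)\alpha(b))$ using multiplicativity of $\alpha$, I apply the $(\tau,\sigma)$-Leibniz identity for $D$ to the pair $(\alpha(a),\alpha(b))$ to obtain
\[
D(\alpha(a))\,\tau(\alpha(b)) + \sigma(\alpha(a))\,D(\alpha(b)),
\]
which is precisely $(D\circ\alpha)(a)\,(\tau\circ\alpha)(b) + (\sigma\circ\alpha)(a)\,(D\circ\alpha)(b)$, establishing the $(\tau\circ\alpha,\sigma\circ\alpha)$-Leibniz identity.

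There is no real obstacle here: the proof amounts to applying the definitions and using that $\alpha$ preserves products, once on the outside (for $\alpha\circ D$) and once on the inside (for $D\circ\alpha$). The only thing worth emphasizing in the write-up is that no hypothesis on how $\alpha$ interacts with $\tau$, $\sigma$, or $D$ is needed, unlike in the earlier proposition on $[D,D']$ where commutation assumptions were essential to cancel the mixed terms.
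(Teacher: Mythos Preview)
Your proof is correct and follows essentially the same approach as the paper: apply the $(\tau,\sigma)$-Leibniz rule for $D$ and use multiplicativity of $\alpha$ to distribute it over the resulting products. The paper only writes out the $\alpha\circ D$ case and dismisses $D\circ\alpha$ with ``same argument the other way around,'' whereas you spell out both; otherwise the arguments are identical.
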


\begin{proof}
For $a,b \in \A$, we have
\[
(\alpha \circ D)(ab) = \alpha(D(a)\tau(b)+\sigma(a)D(b)) = (\alpha \circ D)(a) (\alpha \circ \tau)(b) + (\alpha \circ \sigma)(a) (\alpha \circ D)(b).
\]
So $\alpha \circ D$ is a $(\alpha \circ \tau,\alpha \circ \sigma)$-derivation ; same argument the other way around.
\end{proof}

Notice that if $\alpha : \A \to \A$ is invertible, then $\alpha^{-1} : \A \to \A$ is also an algebra morphism.

\begin{cor} \hfill \label{Cor:One-to-Two_Der}
\begin{enumerate}[(i)]
\item Let $D$ be a $\sigma$-derivation. If $\tau$ is an algebra endomorphism of $\A$, then $\tau \circ D$ is a $(\tau,\tau \circ \sigma)$-derivation.
\item Let $D$ be a $(\tau,\sigma)$-derivation. \\
If $\tau$ is invertible, then 
$\tau^{-1} \circ D$ is a $(\tau^{-1} \circ \sigma)$-derivation. \\
If $\sigma$ is invertible, then 
$\sigma^{-1} \circ D$ is a $(\sigma^{-1} \circ \tau,id)$-derivation, and if $\A$ is commutative, it is also a $(\sigma^{-1} \circ \tau)$-derivation.
\end{enumerate}
\end{cor}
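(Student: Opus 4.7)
The plan is to observe that all three assertions are immediate consequences of the preceding proposition, which says that for any algebra endomorphism $\alpha$ and any $(\tau,\sigma)$-derivation $D$, the composite $\alpha\circ D$ is an $(\alpha\circ\tau,\alpha\circ\sigma)$-derivation. Combined with the convention introduced in the Remark at the start of the subsection, that a $\sigma$-derivation means a $(id,\sigma)$-derivation (and, in the commutative case, equivalently a $(\sigma,id)$-derivation), each statement reduces to a one-line computation.

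For (i), I would read the hypothesis as "$D$ is an $(id,\sigma)$-derivation" and apply the preceding proposition with $\alpha := \tau$, so that $\tau\circ D$ is an $(\tau\circ id,\tau\circ\sigma) = (\tau,\tau\circ\sigma)$-derivation, which is exactly what is claimed.

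For (ii), I would invoke the remark before the corollary that if $\alpha$ is an invertible algebra endomorphism then $\alpha^{-1}$ is also an algebra endomorphism, and apply the preceding proposition twice. First, with $\alpha := \tau^{-1}$: the composite $\tau^{-1}\circ D$ is a $(\tau^{-1}\circ\tau,\tau^{-1}\circ\sigma) = (id,\tau^{-1}\circ\sigma)$-derivation, which in the terminology of the Remark is precisely a $(\tau^{-1}\circ\sigma)$-derivation. Second, with $\alpha := \sigma^{-1}$: the composite $\sigma^{-1}\circ D$ is a $(\sigma^{-1}\circ\tau,\sigma^{-1}\circ\sigma) = (\sigma^{-1}\circ\tau,id)$-derivation. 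When $\A$ is commutative, the Remark tells us that $(\sigma^{-1}\circ\tau,id)$-derivations coincide with $(id,\sigma^{-1}\circ\tau)$-derivations, so this is also a $(\sigma^{-1}\circ\tau)$-derivation.

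There is essentially no obstacle here; the only points requiring care are bookkeeping of the convention (which slot holds $id$ in the shorthand "$\sigma$-derivation") and the use of the fact, just pointed out before the statement, that the inverse of an invertible algebra endomorphism is again an algebra endomorphism, so that the preceding proposition is applicable with $\alpha=\tau^{-1}$ or $\alpha=\sigma^{-1}$.
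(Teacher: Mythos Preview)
Your proposal is correct and matches the paper's intended approach exactly: the corollary is stated without proof because it follows immediately from the preceding proposition (composition with an algebra endomorphism $\alpha$ turns a $(\tau,\sigma)$-derivation into an $(\alpha\circ\tau,\alpha\circ\sigma)$-derivation), together with the remark that the inverse of an invertible algebra endomorphism is again an algebra endomorphism, and the convention that a $\sigma$-derivation is an $(id,\sigma)$-derivation.
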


\bigskip

When $\sigma(x)a = a\sigma(x)$ (or $\tau(x)a = a\tau(x)$) for all $x, a \in \A$ and in particular when $\A$ is commutative, $\mathcal{D}_{\tau,\sigma}(\A)$ carries a natural left (or right) $\A$-module
structure defined by $(a,D) \mapsto a \cdot D : x \mapsto aD(x)$. For clarity we will always denote the module multiplication by $\cdot$ and the algebra multiplication in $\A$ by juxtaposition. We will sometimes extend maps $\alpha : \A \to \A$ on $\mathcal{D}_{\tau,\sigma}(\A)$ by $\overline{\alpha}(a \cdot D) = \alpha(a) \cdot D$. \\

If $a, b \in \A$ we shall write $a~|~b$ if there is an element $c \in \A$ such that $ac = b$. If $S \subseteq \A$ is a subset of $\A$, a \emph{greatest common divisor} of $S$, noted $\gcd(S)$, is defined as an element of $\A$ satisfying, for all $a \in S$
\begin{equation*}
\gcd(S)~|~a \qquad \text{and for}\ b \in \A,\quad  b~|~a \Rightarrow b~|~\gcd(S)
\end{equation*}
It follows directly from the definition that
\begin{equation} \label{gcd-subsets}
S \subseteq T \subseteq A \Rightarrow \gcd(T)~|~\gcd(S)
\end{equation}
whenever $\gcd(S)$ and $\gcd(T)$ exist. If $\A$ is a unique factorization domain one can show that a $\gcd(S)$ exists for any nonempty subset $S$ of $\A$ and that this element is unique up to a multiple of an invertible element in $\A$. Thus we are allowed to speak of \emph{the} $\gcd$.

\begin{theorem}[\cite{HLS06}, Theorem 4 p.319] \label{Thm:Rank-One-Module}
Let $\tau$ and $\sigma$ be different algebra endomorphisms on a unique factorization domain $\A$. Then $\mathcal{D}_{\tau,\sigma}(A)$ is free of rank one as an $\A$-module with generator
\begin{equation}
\Delta \coloneqq \frac{\tau - \sigma}{g} : x \mapsto \frac{(\tau - \sigma)(x)}{g},
\end{equation}
where $g = \gcd((\tau - \sigma)(\A))$.
\end{theorem}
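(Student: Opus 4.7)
\textbf{Step 1 (the candidate $\Delta$ is well defined and is a $(\tau,\sigma)$-derivation).} Since $g\coloneqq\gcd((\tau-\sigma)(\A))$ divides $(\tau-\sigma)(x)$ for every $x\in\A$, and $\A$ is a domain (being a UFD), the element $\Delta(x)=(\tau-\sigma)(x)/g$ is a well-defined element of $\A$, and the map $\Delta$ is $R$-linear. To check the $(\tau,\sigma)$-Leibniz rule I would use the purely algebraic identity
\[
\tau(ab)-\sigma(ab)=\bigl(\tau(a)-\sigma(a)\bigr)\tau(b)+\sigma(a)\bigl(\tau(b)-\sigma(b)\bigr),
\]
dividing through by $g$. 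Alternatively this is just \autoref{Lem:sigma-tau-difference} applied ``with $c=1/g$'' after clearing denominators.

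\textbf{Step 2 (freeness).} Because $\tau\neq\sigma$, there exists $x_0\in\A$ with $(\tau-\sigma)(x_0)\neq 0$, and since $\A$ is a domain and $g\neq 0$, this forces $\Delta(x_0)\neq 0$. If $a\cdot\Delta=0$, then $a\,\Delta(x_0)=0$ in the domain $\A$, so $a=0$. Thus $\Delta$ generates a free submodule.

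\textbf{Step 3 (generation).} The key input is a proportionality identity. Since $\A$ is commutative, $D(xy)=D(yx)$, and expanding both sides via the $(\tau,\sigma)$-Leibniz rule gives
\[
D(x)\bigl(\tau(y)-\sigma(y)\bigr)=D(y)\bigl(\tau(x)-\sigma(x)\bigr)\qquad\text{for all }x,y\in\A.
\]
I then pass to the field of fractions $K=\mathrm{Frac}(\A)$. Fixing $x_0$ with $(\tau-\sigma)(x_0)\neq 0$, the quotient
\[
q\coloneqq \frac{D(x_0)}{(\tau-\sigma)(x_0)}\in K
\]
is independent of $x_0$ by the previous identity, and yields $D(x)=q(\tau-\sigma)(x)$ for every $x$ with $(\tau-\sigma)(x)\neq 0$. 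For $x$ with $(\tau-\sigma)(x)=0$, the same identity gives $D(x)(\tau-\sigma)(x_0)=0$ and so $D(x)=0=q(\tau-\sigma)(x)$. Hence $D=q(\tau-\sigma)$ as maps $\A\to K$.

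\textbf{Step 4 (the UFD argument -- this is the main obstacle).} It remains to show $a\coloneqq qg\in\A$, for then $D(x)=qg\cdot\Delta(x)=a\cdot\Delta(x)$ and we are done. Since $\A$ is a UFD, write $q=p/r$ with $p,r\in\A$ and $\gcd(p,r)=1$. The equality $rD(x)=p(\tau-\sigma)(x)$ holds in $\A$, so $r$ divides $p(\tau-\sigma)(x)$; coprimality of $r$ and $p$ then forces $r\mid (\tau-\sigma)(x)$ for every $x\in\A$. By the defining property of the gcd, $r\mid g$, so $g/r\in\A$ and $a=qg=p\cdot(g/r)\in\A$. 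The delicate point here is precisely the interplay between the fraction-field ``constant'' $q$ and the divisibility definition of $g$; once this is untangled via the UFD hypothesis, generation follows.

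Combining Steps 1--4, $\mathcal{D}_{\tau,\sigma}(\A)=\A\cdot\Delta$ and the generator $\Delta$ has trivial annihilator, establishing that $\mathcal{D}_{\tau,\sigma}(\A)$ is free of rank one with generator $\Delta$.
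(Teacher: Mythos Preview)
The paper does not supply its own proof of this theorem; it is quoted verbatim from \cite{HLS06} (Theorem~4, p.~319) as a preliminary result and left unproved here. Your argument is correct and is essentially the standard one from the cited source: the proportionality identity $D(x)(\tau-\sigma)(y)=D(y)(\tau-\sigma)(x)$ obtained from commutativity, followed by the UFD divisibility argument showing that the fraction-field constant $q$ becomes integral after multiplying by $g$, is exactly how the original proof proceeds.
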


\begin{defi}
The \emph{annihilator} $\Ann(D)$ of an element $D \in \mathcal{D}_{\tau,\sigma}(\A)$ is
\[
\Ann(D) = \{a \in \A,\ a \cdot D = 0\}.
\]
\end{defi}
It is easy to see that $\Ann(D)$ is an ideal of $\A$. Notice that for an algebra automorphism $\alpha$, we have $\Ann(\alpha \circ D) = \alpha(\Ann(D))$.

For $D \in \mathcal{D}_{\tau,\sigma}(\A)$, we denote by
\[
\A \cdot D = \{a \cdot D,\ a \in \A\}
\]
the cyclic $\A$-submodule generated by $D$.

\begin{example}
Various examples of $(\tau,\sigma)$-derivations are given in \cite[Table 1, p.11]{Hartwig-master02}. We list in \autoref{Table:sigma-tau-der-examples} such examples and some others, mainly obtained using \autoref{Lem:sigma-tau-difference}. On a algebra of functions, they include the usual derivative, the shift difference and different Jackson derivatives.

\begin{table}
\hspace*{-1.2em}
\begin{tabular}{c|c|c|c}
\hline
Operator $D$ & $D(f(t))$ & $D((fg)(t))$ & $(\tau,\sigma)$ \\
\hline
Differentiation & $f'(t)$ & $D(f(t)) g(t) + f(t) D(g(t))$ & $(id,id)$ \\
Shift $S$ & $f(t+1)$ & $f(t+1) D(g(t))$ & $(S,0)$ \\
Shift difference & $f(t+1)-f(t)$ & $D(f(t)) g(t) + f(t+1) D(g(t))$ & $(S,id)$ \\
$q$-dilatation $T_q$ & $f(qt)$ & $f(qt) D(g(t))$ & $(T_q,0)$ \\[1ex]
Jackson $q$-derivative & $\dfrac{f(t)-f(qt)}{t-qt}$ & $D(f(t)) g(t) + f(qt) D(g(t))$ & $(id,T_q)$ \\[2ex]
Jackson symmetric $q$-derivative & $\dfrac{f(q^{-1}t)-f(qt)}{q^{-1}t-qt}$ & $D(f(t)) g(q^{-1}t) + f(qt) D(g(t))$ & $(T_{q^{-1}},T_q)$ \\[2ex]
Jackson $(p,q)$-derivative & $\dfrac{f(pt)-f(qt)}{pt-qt}$ & $D(f(t)) g(pt) + f(qt) D(g(t))$ & $(T_p,T_q)$ \\[2ex]
$p$-dilatation derivative & $f'(pt)$ & $D(f(t)) g(pt) + f(pt) D(g(t))$ & $(T_p,T_p)$
\end{tabular}
\caption{Some $(\tau,\sigma)$-derivations and their Leibniz rules}
\label{Table:sigma-tau-der-examples}
\end{table}
\end{example}

\subsection{Quasi-Lie algebras, Hom-Lie algebras and extensions}

Hom-Lie algebras were first introduced by Hartwig, Larsson and Silvestrov in \cite{HLS06} precisely by using $\sigma$-derivations. We will provide similar construction using $(\tau,\sigma)$-derivations. Since then, other Hom-structure such as Hom-associative algebras, Hom-Poisson algebras, \textit{etc.}, were studied, see \cite{BEM12,MakSil-struct,HomAlgHomCoalg,MakSil-def,AEM11,Yau-Poisson}.
We recall here the present definition and some properties of Hom-Lie algebras.

\begin{defi}
A Hom-Lie algebra is a triple $(\A,[~,~],\alpha)$ where $\A$ is a $R$-algebra, $[~,~] : \A \otimes \A \to \A$ is an $R$-bilinear skew-symmetric bracket and $\alpha : \A \to A$ is a linear map, called \emph{twist}, such that for all $x,y,z \in \A$
\begin{equation} \label{Hom-Jacobi}
\circlearrowleft_{x,y,z}{[\alpha(x),[y,z]]} = 0.
\end{equation}
The equation \eqref{Hom-Jacobi} is called Hom-Jacobi identity.
\end{defi}

If $\alpha = id$, we recover the definition of a Lie algebra, so Lie algebras will be considered as Hom-Lie algebras with the identity as twist map. \\

Let $\A = \left(\A,\mu,\alpha \right)$ and $\A^{\prime} = \left(\A^{\prime},\mu^{\prime}, \alpha^{\prime}\right)$ be two Hom-Lie algebras. A linear map $\varphi \ : \A \rightarrow \A^{\prime}$ is a \emph{morphism of Hom-Lie algebras} if%
\[
\mu^{\prime} \circ (\varphi \otimes \varphi) = \varphi \circ \mu \quad \text{and} \qquad \varphi \circ \alpha = \alpha^{\prime}\circ \varphi.
\]
It is said to be a \emph{weak morphism} if holds only the first condition. \\

More generally, the notion of quasi-Lie algebra was discussed in \cite{LS05,HomAlgHomCoalg}. Let $\mathcal{L}_R(\A)$ be the set of $R$-linear maps on $\A$.
\begin{defi} (Larsson, Silvestrov \cite{LS05}) \label{def:quasiLiealg}
A \emph{quasi-Lie algebra} is a tuple \mbox{$(\A,[~,~],\alpha,\beta,\omega,\theta)$}
where
\begin{itemize}
    \item $\A$ is a $R$-algebra,
    \item $[~,~]: \A \otimes \A \to \A$ is a bilinear map called a \emph{product} or a \emph{bracket} on $\A$,
    \item $\alpha,\beta : \A \to \A$, are $R$-linear maps,
    \item $\omega : D_\omega \to \mathcal{L}_R(V)$ and $\theta : D_\theta \to \mathcal{L}_R(V)$ is a map with domain of definition \mbox{$D_\omega, D_\theta \subseteq \A \otimes \A$},
\end{itemize}
such that the following conditions hold:
\begin{description}
    \item[$\omega$-symmetry] The product satisfies a generalized skew-symmetry condition
\[
[x,y] = \omega(x,y) [y,x], \quad \text{ for all } (x,y) \in D_\omega;
\]
    \item[quasi-Jacobi identity] The bracket satisfies a generalized Jacobi identity
\[
\circlearrowleft_{x,y,z} \Big\{\,\theta(z,x) \Big([\alpha(x),[y,z]] + \beta[x,[y,z]] \Big)\Big\}=0, \quad \text{ for all } (z,x),(x,y),(y,z) \in D_\theta.
\]
\end{description}
\end{defi}
This class includes (Hom-)Lie superalgebras, $\intg$-graded (Hom-)Lie algebras, and color (Hom-)Lie algebras (\cite{AM10}). Specifying in the definition of quasi-Lie algebras $D_\omega = \A \otimes \A$, $\beta=0_\A$ and $\theta(x,y) = \omega(x,y) = -id_\A$ for all $(x,y) \in D_\omega = D_\theta$, we recover the Hom-Lie algebras with twisting linear map $\alpha$.

\bigskip

The following result shows how to obtain other Hom-Lie algebras from a given one. It can be used to twist classical Lie algebras into Hom-Lie algebras using a morphism. It also works with other types of structures (see \cite{BEM12} for example), so it is sometimes referred as the `twisting principle'.
\begin{theorem}[{\cite[Theorem 3.2]{Yau-Poisson}}] \label{Thm:TwistAssLie}
Let $(\A, [~,~],\alpha)$ be a Hom-Lie algebra and $\rho : \A \to \A$ be a weak morphism, i.e. $\rho \circ [~,~] = [~,~] \circ \rho^{\otimes 2}$. Then $\A_\rho = (\A, [~,~]_\rho = \rho \circ [~,~], \alpha_\rho = \rho \circ \alpha)$ is a Hom-Lie algebra.
\end{theorem}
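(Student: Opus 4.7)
The plan is to verify the two defining conditions of a Hom-Lie algebra for $\A_\rho = (\A, [~,~]_\rho, \alpha_\rho)$: skew-symmetry of the new bracket and the Hom-Jacobi identity with respect to the new twist. Both should follow directly from the weak morphism condition $\rho\circ[~,~] = [~,~]\circ\rho^{\otimes 2}$ combined with the corresponding properties of the original Hom-Lie algebra.

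First I would check skew-symmetry: since $[~,~]_\rho = \rho\circ[~,~]$ and $\rho$ is linear, skew-symmetry of $[~,~]$ yields $[x,y]_\rho = \rho([x,y]) = -\rho([y,x]) = -[y,x]_\rho$ for all $x,y \in \A$. This is essentially immediate.

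The main content is the Hom-Jacobi identity. For any $x,y,z \in \A$, I would compute the twisted triple bracket by applying the weak morphism condition twice. Using $\rho\circ[~,~] = [~,~]\circ\rho^{\otimes 2}$ on the inner bracket,
\[
[y,z]_\rho = \rho([y,z]) = [\rho(y),\rho(z)],
\]
and then applying it again on the outer bracket together with $\alpha_\rho(x) = \rho(\alpha(x))$ gives
\[
[\alpha_\rho(x),[y,z]_\rho]_\rho = \rho\bigl([\rho(\alpha(x)),\rho([y,z])]\bigr) = \rho^{2}\bigl([\alpha(x),[y,z]]\bigr).
\]
Summing cyclically over $x,y,z$, the $R$-linearity of $\rho^{2}$ pulls out of the cyclic sum, and the Hom-Jacobi identity \eqref{Hom-Jacobi} for $(\A,[~,~],\alpha)$ reduces the bracketed quantity to $0$, hence
\[
\circlearrowleft_{x,y,z}[\alpha_\rho(x),[y,z]_\rho]_\rho = \rho^{2}\Bigl(\circlearrowleft_{x,y,z}[\alpha(x),[y,z]]\Bigr) = 0.
\]

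The only mild obstacle is keeping track of the two nested applications of the weak morphism property and observing that the new twist $\alpha_\rho = \rho\circ\alpha$ is precisely what is produced by the $\rho$ coming out of the outer bracket. Note that we do not need $\rho$ to commute with $\alpha$; the definition $\alpha_\rho = \rho\circ\alpha$ is tailored so that no such commutation hypothesis is required. This mirrors the standard twisting principle and yields the claim.
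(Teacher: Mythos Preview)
Your proof is correct and follows essentially the same approach as the paper: verify skew-symmetry directly from linearity of $\rho$, then apply the weak morphism condition twice to rewrite the cyclic sum as $\rho^{2}$ applied to the original Hom-Jacobi identity, which vanishes. The paper's argument is identical in structure, just slightly more terse.
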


\begin{proof}
For $x,y,z \in \A$, we have $[y,x]_\rho = -[x,y]_\rho$ and
\begin{equation*}
\begin{split}
& \circlearrowleft_{x,y,z} [(\rho \circ \alpha)(x),[y,z]_\rho]_\rho \\
&= \circlearrowleft_{x,y,z} \rho([\rho(\alpha(x)),\rho([y,z])]) \\
&= \rho^2\left(\circlearrowleft_{x,y,z}[\alpha(x),[y,z]]\right) \\
&= 0
\end{split}
\end{equation*}
since $[~,~]$ satisfies the Hom-Jacobi condition with $\alpha$ as twist map, so $\A_\rho$ is a Hom-Lie algebra.
\end{proof}
In particular, if $\rho : \A \to \A$ is a Hom-Lie algebra morphism, then it is also a morphism of the Hom-Lie algebra $\A_\rho$.

We will say that the Hom-Lie algebra $\A_\rho = (\A, [~,~]_\rho = \rho \circ [~,~], \alpha_\rho = \rho \circ \alpha)$ is a \emph{twist} of the Hom-Lie algebra $(\A,[~,~],\alpha)$, or that the two Hom-Lie algebras are \emph{twist-equivalent}.

\begin{prop}
Let $(\A,\mu,\alpha)$ be a Hom-Lie algebra, and $\rho : \A \to \A$ a weak morphism. Suppose that
\[
\varphi : (A,\mu,\alpha) \to (\A, \rho \circ \mu, \rho \circ \alpha)
\]
is an isomorphism of Hom-Lie algebras between $\A$ and its $\rho$-twist $A_\rho$. Then $\rho$ measures the default of commutativity between $\varphi$ and $\alpha$, that is $\rho = \varphi \circ \alpha \circ \varphi^{-1} \circ \alpha^{-1}$.
\end{prop}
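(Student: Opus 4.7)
The plan is to simply unravel the definition of a morphism of Hom-Lie algebras and solve for $\rho$. Since the conclusion involves $\alpha^{-1}$, I would begin by observing that $\alpha$ must be implicitly assumed invertible (otherwise the stated identity is ill-posed). The central point is that the definition of a Hom-Lie algebra morphism imposes two independent conditions, and only the twist-compatibility clause is needed here.

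First, I would write out the twist-compatibility condition explicitly. Since the target Hom-Lie algebra is $\A_\rho = (\A, \rho \circ \mu, \rho \circ \alpha)$, its twist map is $\alpha' = \rho \circ \alpha$, and the clause $\varphi \circ \alpha = \alpha' \circ \varphi$ from the definition of a morphism reads
\[
\varphi \circ \alpha = \rho \circ \alpha \circ \varphi.
\]
Next, I would exploit that $\varphi$ is invertible (it is an isomorphism by hypothesis) to post-compose with $\varphi^{-1}$ on the right, obtaining $\varphi \circ \alpha \circ \varphi^{-1} = \rho \circ \alpha$. A further post-composition with $\alpha^{-1}$ on the right then yields the desired identity $\rho = \varphi \circ \alpha \circ \varphi^{-1} \circ \alpha^{-1}$.

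There is essentially no obstacle: the argument is a direct rearrangement of the twist-compatibility axiom. The multiplicative compatibility $\varphi \circ \mu = (\rho \circ \mu) \circ (\varphi \otimes \varphi)$, which is the other part of being a morphism, is not used in producing the formula for $\rho$ and plays the role of an independent constraint on $\varphi$. The genuine content of the proposition is interpretative: the identity $\rho = \varphi \circ \alpha \circ \varphi^{-1} \circ \alpha^{-1}$ expresses $\rho$ as a group-theoretic commutator of $\varphi$ and $\alpha$ inside the automorphism (semi)group of $\A$, which is precisely what is meant by ``$\rho$ measures the default of commutativity between $\varphi$ and $\alpha$''.
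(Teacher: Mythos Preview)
Your proposal is correct and follows essentially the same approach as the paper: both write out the twist-compatibility condition $\varphi \circ \alpha = \rho \circ \alpha \circ \varphi$ and rearrange it to isolate $\rho$. The paper additionally records the consequence of the multiplicative compatibility (namely $\mu \circ (\rho\varphi \otimes \rho\varphi) = \varphi \circ \mu$), but this is an aside and not part of the derivation of the stated formula.
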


\begin{proof}
Since $\varphi : \A \to \A_\rho$ is a morphism, we have
\begin{gather*}
\rho \circ \mu \circ (\varphi \otimes \varphi) = \varphi \circ \mu \\
\varphi \circ \alpha = \rho \circ \alpha \circ \varphi.
\end{gather*}
The second equation is equivalent to $\rho = \varphi \circ \alpha \circ \varphi^{-1} \circ \alpha^{-1}$. Using that $\rho$ is a weak morphism, the first equation also gives $\mu \circ (\rho \circ \varphi \otimes \rho \circ \varphi) = \varphi \circ \mu$.
\end{proof}

\begin{cor}
If a Lie algebra $(\A,\mu,id)$ is isomorphic to a twist-equivalent Hom-Lie algebra $(\A, \rho \circ \mu, \rho)$, then the twist $\rho$ is trivial, $\rho = id$.
\end{cor}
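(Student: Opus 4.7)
The plan is to apply the preceding proposition in the special case $\alpha = id$. By hypothesis, $(\A,\mu,id)$ is a Lie algebra, viewed as a Hom-Lie algebra with identity twist, and the Hom-Lie algebra $(\A, \rho\circ\mu, \rho)$ is a $\rho$-twist of it in the sense of \autoref{Thm:TwistAssLie}; in particular $\rho$ is a weak morphism and the twist map of the target equals $\rho\circ id = \rho\circ\alpha$. The additional assumption is the existence of a Hom-Lie algebra isomorphism $\varphi : (\A,\mu,id) \to (\A,\rho\circ\mu,\rho)$, which is precisely the setup of the preceding proposition.

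The proposition then yields the identity $\rho = \varphi\circ\alpha\circ\varphi^{-1}\circ\alpha^{-1}$. Substituting $\alpha = id$ (which is manifestly invertible, with $\alpha^{-1}=id$) the right-hand side telescopes:
\[
\rho \;=\; \varphi\circ id\circ\varphi^{-1}\circ id \;=\; \varphi\circ\varphi^{-1} \;=\; id.
\]
This is exactly the conclusion of the corollary.

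There is essentially no obstacle, since the corollary is a direct specialization: the expression $\varphi\circ\alpha\circ\varphi^{-1}\circ\alpha^{-1}$ measures the failure of $\varphi$ to commute with the twist $\alpha$, and when $\alpha$ is the identity this failure is automatically trivial. The only point worth noting is the verification that the hypotheses match, namely that the twist of $(\A,\mu,id)$ in the sense of \autoref{Thm:TwistAssLie} is indeed $(\A,\rho\circ\mu, \rho\circ id) = (\A,\rho\circ\mu,\rho)$, so that the isomorphism assumed in the corollary is exactly the $\varphi$ of the preceding proposition.
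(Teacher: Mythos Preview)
Your proof is correct and is exactly the intended argument: the paper states the corollary without proof, as an immediate specialization of the preceding proposition with $\alpha = id$, which is precisely what you carry out.
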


\begin{remark}
Given a Lie algebra, this shows that twists of it are never isomorphic to it (as Hom-Lie algebras). This is also true for associative algebras.
\end{remark}

\bigskip

We recall here the definitions and results on central extensions of Hom-Lie algebras developed in \cite{HLS06}. The only minor difference is that the homomorphism $\zeta$ in the old definition is now replaced by the twist map $\alpha$. The link between these two definitions is $\alpha = id + \zeta$, but one should convince oneself that the results are the same by replacing $(L,\zeta)$ by $(L,\alpha)$ Hom-Lie algebras.

\begin{defi}
An extension of a Hom-Lie algebra $(L, \alpha)$ by an abelian Hom-Lie algebra $(\mathfrak{a}, \alpha_\mathfrak{a})$ is a commutative diagram with exact rows
\begin{equation}
\xymatrix{
0 \ar[r] & \mathfrak{a} \ar[r]^{\imath} \ar[d]_{\alpha_\mathfrak{a}} & \hat{L} \ar[r]^{\pr} \ar[d]_{\hat{\alpha}} & L \ar[r] \ar[d]_{\alpha} & 0  \\
0 \ar[r] & \mathfrak{a} \ar[r]^{\imath} & \hat{L} \ar[r]^{\pr} & L \ar[r] & 0
}
\end{equation}
where $(\hat{L},\hat{\alpha})$ is a Hom-Lie algebra. We say that the extension is \emph{central} if
\begin{equation}
\imath(\mathfrak{a}) \subseteq Z(\hat{L}) \coloneqq \{x \in \hat{L},\ [x,\hat{L}]_{\hat{L}} = 0 \}.
\end{equation}
\end{defi}

\begin{theorem}[\cite{HLS06}, Theorem 21 p.335] \label{Thm:ExtHomLie-uniqueness}
Suppose $(L,\alpha)$ and $(\mathfrak{a},\alpha_\mathfrak{a})$ are Hom-Lie algebras with $\mathfrak{a}$ abelian. If there exists a central extension $(\hat{L},\hat{\alpha})$ of $(L,\alpha)$ by $(\mathfrak{a},\alpha_\mathfrak{a})$ then for every section $s : L \to \hat{L}$ there is a $g_s \in \bigwedge^2(L,\mathfrak{a})$ and a linear map $f_s : \hat{L} \to \mathfrak{a}$ such that
\begin{gather}
f_s \circ \imath = \alpha_\mathfrak{a} \\
g_s(\alpha(x),\alpha(y)) = f_s([s(x),s(y)]_{\hat{L}})
\intertext{and}
\circlearrowleft_{x,y,z} g_s(\alpha(x),[y,z]_L) = 0 \label{indep-2cocycle}
\end{gather}
for all $x,y,z \in L$. Moreover, equation \eqref{indep-2cocycle} is independent of the choice of section $s$.
\end{theorem}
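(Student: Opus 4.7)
The plan is to exploit the vector-space splitting $\hat L = s(L) \oplus \imath(\mathfrak{a})$ afforded by any section, and to extract both $f_s$ and the cocycle $g_s$ from the bracket in $\hat L$. Define the linear projection onto the ideal,
\[
\pi_s(\hat x) := \imath^{-1}\bigl(\hat x - s(\pr(\hat x))\bigr) \in \mathfrak{a},
\]
which is well-defined because $\hat x - s(\pr(\hat x)) \in \ker\pr = \imath(\mathfrak{a})$ by exactness and $\imath$ is injective. Set $f_s := \alpha_\mathfrak{a} \circ \pi_s$; since $\pi_s \circ \imath = \mathrm{id}_\mathfrak{a}$, the first relation $f_s \circ \imath = \alpha_\mathfrak{a}$ follows immediately.

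Next I would encode the failure of $s$ to be a Hom-Lie morphism by writing
\[
[s(x), s(y)]_{\hat L} = s([x,y]_L) + \imath\bigl(\widetilde g(x,y)\bigr),
\]
which is meaningful because $\pr$ preserves the bracket, so $\pr[s(x),s(y)]_{\hat L} = [x,y]_L$; the map $\widetilde g : L \wedge L \to \mathfrak{a}$ so defined is bilinear and skew-symmetric. Applying $f_s$ gives $f_s[s(x),s(y)]_{\hat L} = \alpha_\mathfrak{a}(\widetilde g(x,y))$, so I would \emph{define} $g_s \in \bigwedge^2(L,\mathfrak{a})$ by declaring relation (2) to hold on pairs $(\alpha(x),\alpha(y))$ and extending skew-symmetrically to $L \wedge L$; well-definedness on the image of $\alpha \otimes \alpha$ has to be checked, or circumvented by invertibility of $\alpha$.

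For the cocycle identity, I would substitute the triple $(s(x), s(y), s(z))$ into the Hom-Jacobi identity of $\hat L$, expand $[s(y),s(z)]_{\hat L}$ by the previous display, and use centrality $\imath(\mathfrak{a}) \subseteq Z(\hat L)$ to kill every bracket involving an $\imath$-term. Writing $\hat\alpha s(x) = s(\alpha(x)) + \imath(h(x))$, where $h : L \to \mathfrak{a}$ exists by exactness applied to $\hat\alpha s - s\alpha$ (which has image in $\ker\pr$), centrality again removes the $\imath(h(x))$-contribution, and the cyclic sum reduces to
\[
0 = \circlearrowleft_{x,y,z}\Bigl(s\bigl([\alpha(x),[y,z]_L]_L\bigr) + \imath\bigl(\widetilde g(\alpha(x),[y,z]_L)\bigr)\Bigr).
\]
The $s$-sum vanishes by the Hom-Jacobi identity in $L$ (and injectivity of $s$), so injectivity of $\imath$ forces $\circlearrowleft \widetilde g(\alpha(x),[y,z]_L) = 0$; applying $\alpha_\mathfrak{a}$ and using the relation between $g_s$ and $\widetilde g$ yields the third identity.

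For independence of section, any two sections differ by $s' = s + \imath \circ \lambda$ for some linear $\lambda : L \to \mathfrak{a}$. A direct expansion of $[s'(x),s'(y)]_{\hat L}$ using bilinearity, skew-symmetry and centrality shows that $\widetilde g_{s'}(x,y) - \widetilde g_s(x,y)$ is a coboundary-type expression linear in $\lambda$ and in $[\,\cdot\,,\,\cdot\,]_L$; the cyclic sum in (3) then collapses via Hom-Jacobi in $L$, leaving the two cyclic sums equal. The main obstacle I anticipate is the careful bookkeeping of the auxiliary map $h$ measuring $\hat\alpha s - s\alpha$, and verifying that the passage between $\widetilde g$ and the $g_s$ required by (2) is consistent when $\alpha$ is not assumed surjective, so that $g_s$ genuinely lands in $\bigwedge^2(L,\mathfrak{a})$.
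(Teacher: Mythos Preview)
The paper does not actually prove this theorem: it is quoted verbatim from \cite{HLS06} (Theorem~21, p.~335) as background material on central extensions of Hom-Lie algebras, and no proof is supplied in the present text. So there is no in-paper argument to compare against.

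That said, your outline follows the standard route used for such extension results and is essentially what one finds in the original reference: split $\hat L$ along the section, extract the $\mathfrak a$-valued defect $\widetilde g$ of $s$ with respect to the bracket, and feed the Hom--Jacobi identity of $\hat L$ through centrality to obtain the cocycle condition. Your treatment of the auxiliary map $h$ measuring $\hat\alpha\circ s - s\circ\alpha$, and of the independence of section via $s'=s+\imath\lambda$, is exactly right.

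The one point that needs tightening is the one you flag yourself. With your choice $f_s=\alpha_{\mathfrak a}\circ\pi_s$, relation~(2) reads $g_s(\alpha(x),\alpha(y))=\alpha_{\mathfrak a}(\widetilde g(x,y))$, and you then try to \emph{define} $g_s$ from this on $\operatorname{im}(\alpha\times\alpha)$ only; but the identity you actually establish in the Hom--Jacobi step is $\circlearrowleft\widetilde g(\alpha(x),[y,z]_L)=0$, which is precisely~(3) for $\widetilde g$, not for a $g_s$ defined indirectly through~(2). The clean fix is to set $g_s:=\widetilde g$ outright and instead take $f_s:=\pi_s\circ\hat\alpha$; then $f_s\circ\imath=\pi_s\circ\imath\circ\alpha_{\mathfrak a}=\alpha_{\mathfrak a}$ still holds, relation~(3) is exactly what your Hom--Jacobi computation gives, and relation~(2) becomes a genuine identity to verify by expanding $\hat\alpha[s(x),s(y)]_{\hat L}$ using the commutative square $\pr\circ\hat\alpha=\alpha\circ\pr$ together with your decomposition $\hat\alpha s(x)=s(\alpha(x))+\imath(h(x))$ and centrality. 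This avoids any appeal to invertibility or surjectivity of $\alpha$ and makes $g_s\in\bigwedge^2(L,\mathfrak a)$ manifest.
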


\begin{defi}
A central Hom-Lie algebra extension $(\hat{L},\hat{\alpha})$ of $(L,\alpha)$ by $(\mathfrak{a},\alpha_\mathfrak{a})$ is called trivial if there exists a linear section $s : L \to \hat{L}$ such that
\begin{equation}
g_s(x,y) = 0
\end{equation}
for all $x,y \in L$.
\end{defi}

\begin{theorem}[\cite{HLS06}, Theorem 24 p.336] \label{Thm:ExtHomLie-existence}
Suppose $(L,\alpha)$ and $(\mathfrak{a},\alpha_\mathfrak{a})$ are Hom-Lie algebras with $\mathfrak{a}$ abelian. Then for every $g \in \bigwedge^2(L,\mathfrak{a})$ and every linear map $f : L\oplus \mathfrak{a} \to \mathfrak{a}$ such that
\begin{gather}
f(0,a) = \alpha_\mathfrak{a}(a) \quad \text{for $a \in \mathfrak{a}$}, \\
g(\alpha(x),\alpha(y)) = f([x,y]_L,g(x,y))
\intertext{and}
\circlearrowleft_{x,y,z} g(\alpha(x),[y,z]_L) = 0
\end{gather}
for $x,y,z \in L$, there exists a Hom-Lie algebra $(\hat{L},\hat{\alpha})$ which is a central extension of $(L,\alpha)$ by $(\mathfrak{a},\alpha_\mathfrak{a})$.
\end{theorem}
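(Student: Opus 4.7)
The natural approach is the standard one for central extensions: build $\hat{L}$ as $L \oplus \mathfrak{a}$ with a bracket that is $[~,~]_L$ on the first factor plus a $g$-twisted correction landing in $\mathfrak{a}$, and with a twist map whose first component is $\alpha$ and whose second component is governed by $f$. Explicitly, I would set $\hat{L} := L \oplus \mathfrak{a}$ as an $R$-module and define
\[
[(x,a),(y,b)]_{\hat{L}} := \bigl([x,y]_L,\,g(x,y)\bigr), \qquad \hat{\alpha}(x,a) := \bigl(\alpha(x),\,f(x,a)\bigr),
\]
together with $\imath : \mathfrak{a} \to \hat{L}$, $a \mapsto (0,a)$, and $\pr : \hat{L} \to L$, $(x,a) \mapsto x$.

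Next I would check the three structural conditions. Skew-symmetry of $[~,~]_{\hat{L}}$ is immediate from the skew-symmetry of $[~,~]_L$ and from $g \in \bigwedge^{2}(L,\mathfrak{a})$. For the Hom-Jacobi identity, I compute
\[
\bigl[\hat{\alpha}(x,a),[(y,b),(z,c)]_{\hat{L}}\bigr]_{\hat{L}} = \bigl([\alpha(x),[y,z]_L]_L,\, g(\alpha(x),[y,z]_L)\bigr),
\]
so summing cyclically over $(x,a),(y,b),(z,c)$ gives on the $L$-component the Hom-Jacobi identity of $(L,\alpha)$, which vanishes, and on the $\mathfrak{a}$-component exactly the hypothesis $\circlearrowleft_{x,y,z} g(\alpha(x),[y,z]_L)=0$. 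Thus $(\hat{L},[~,~]_{\hat{L}},\hat{\alpha})$ is a Hom-Lie algebra.

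It remains to verify that the extension fits the diagram and is central. Commutativity of the left square reduces to $\hat{\alpha}(\imath(a)) = \hat{\alpha}(0,a) = (0,f(0,a)) = (0,\alpha_\mathfrak{a}(a)) = \imath(\alpha_\mathfrak{a}(a))$, which uses $f(0,a) = \alpha_\mathfrak{a}(a)$. Commutativity of the right square is immediate from the definitions of $\pr$ and $\hat{\alpha}$, and exactness of the rows is obvious. Centrality follows because $[(0,a),(y,b)]_{\hat{L}} = (0, g(0,y)) = 0$ since $g$ is alternating (so $g(0,y)=0$ by $R$-bilinearity), so $\imath(\mathfrak{a}) \subseteq Z(\hat{L})$.

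The only point that requires real care is the compatibility condition $g(\alpha(x),\alpha(y))=f([x,y]_L,g(x,y))$: it is needed to make $\hat{\alpha}$ a morphism with respect to $[~,~]_{\hat{L}}$ on the $\mathfrak{a}$-component, i.e.\ to ensure that in the uniqueness statement of \autoref{Thm:ExtHomLie-uniqueness} one recovers $f_s = f$ and $g_s = g$ when one chooses the obvious section $s(x) = (x,0)$. Once this compatibility is noted, no further computation is needed; hence I expect no genuine obstacle beyond carefully tracking which hypothesis is used at each step.
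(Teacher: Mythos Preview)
The paper does not give its own proof of this statement: it is quoted verbatim from \cite{HLS06} (Theorem~24, p.~336) and stated without argument. Your construction---taking $\hat{L}=L\oplus\mathfrak{a}$ with $[(x,a),(y,b)]_{\hat{L}}=([x,y]_L,g(x,y))$ and $\hat{\alpha}(x,a)=(\alpha(x),f(x,a))$---is precisely the standard one and matches what is done in \cite{HLS06}, so your proposal is correct and aligned with the source.

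One minor clarification worth recording: in the definition of Hom-Lie algebra adopted in this paper the twist map is only required to be $R$-linear, not bracket-preserving. Consequently the hypothesis $g(\alpha(x),\alpha(y))=f([x,y]_L,g(x,y))$ is not strictly needed to check that $(\hat{L},[~,~]_{\hat{L}},\hat{\alpha})$ is a Hom-Lie algebra or that the diagram commutes and the extension is central; all of that follows from the other two hypotheses exactly as you wrote. Your identification of its role is nonetheless the right one: it guarantees that with the canonical section $s(x)=(x,0)$ one recovers $g_s=g$ and $f_s=f$ in the sense of \autoref{Thm:ExtHomLie-uniqueness}, so that the extension constructed is the one ``classified'' by the given pair $(g,f)$ rather than some other one.
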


\section{\texorpdfstring{Brackets on $(\tau,\sigma)$-derivations}{Brackets on (tau,sigma)-derivations}} \label{Sec:Bracket-st-der}

We let $\A$ be a commutative associative unital algebra and fix morphisms $\tau,\sigma : \A \to \A$ and an element $\Delta$ of $\mathcal{D}_{\tau,\sigma}(\A)$. The goal is to define a bracket on $(\tau,\sigma)$-derivations which satisfies some quasi-Jacobi identity; and when $\tau = id$, to recover the bracket defined in \cite{HLS06} for $\sigma$-derivations:
\begin{equation}
[a \cdot \Delta,b \cdot \Delta]_\sigma \coloneqq  (\sigma(a) \cdot \Delta) \circ (b \cdot \Delta) - (\sigma(b) \cdot \Delta) \circ (a \cdot \Delta).
\end{equation}

\subsection{\texorpdfstring{With $\tau$ invertible}{With tau invertible}}

If $\tau$ is invertible, then $\tau^{-1} \circ \Delta$ is a $(\tau^{-1} \circ \sigma)$-derivation (\autoref{Cor:One-to-Two_Der}). We rewrite the previous bracket in this situation, and apply $\tau$ on both sides, using that $\tau^{-1}$ is a morphism:
\[
\tau([a \cdot \tau^{-1} \circ \Delta,b \cdot \tau^{-1} \circ \Delta]_{\tau^{-1} \circ \sigma}) = (\sigma(a) \cdot \Delta) \circ (b \cdot \tau^{-1} \circ \Delta) - (\sigma(b) \cdot \Delta) \circ (a \cdot \tau^{-1} \circ \Delta) :=: [\tau(a) \cdot \Delta,\tau(b) \cdot \Delta]_{\tau,\sigma}
\]
The last `equality' is the way we want to define our new bracket on $(\tau,\sigma)$-derivations, having in mind that $\tau$ goes through the bracket in the left expression. We have the following theorem, which introduces an $R$-algebra structure on $\A \cdot \Delta$.

\begin{theorem} \label{Thm:Bracket-Gen-Der}
Let $\A$ be a commutative associative unital algebra and $\tau,\sigma : \A \to \A$ different morphisms with $\tau$ invertible. Let $\Delta$ be a $(\tau,\sigma)$-derivation.
If the equation
\begin{equation} \label{annulation-condition}
(\sigma \circ \tau^{-1})(\Ann(\Delta)) \subseteq \Ann(\Delta)
\end{equation}
holds, then the map $[~,~]_{\tau,\sigma} : \A \cdot \Delta \times \A \cdot \Delta \to \A \cdot \Delta$ defined by setting
\begin{equation} \label{bracket-def}
[a \cdot \Delta,b \cdot \Delta]_{\tau,\sigma} \coloneqq ((\sigma \circ \tau^{-1})(a) \cdot \Delta) \circ (\tau^{-1}(b) \cdot \tau^{-1} \circ \Delta) - ((\sigma \circ \tau^{-1})(b) \cdot \Delta) \circ (\tau^{-1}(a) \cdot \tau^{-1} \circ \Delta)
\end{equation}
for $a,b \in \A$ is a well-defined $R$-algebra bracket on the $R$-linear space $\A \cdot \Delta$, and it satisfies the following identities for $a,b,c \in \A$:
\begin{gather}
[a \cdot \Delta,b \cdot \Delta]_{\tau,\sigma} = \Big( (\sigma \circ \tau^{-1})(a) (\Delta \circ \tau^{-1})(b) - (\sigma \circ \tau^{-1})(b) (\Delta \circ \tau^{-1})(a) \Big) \cdot \Delta  \label{interior-bracket} \\
[b \cdot \Delta,a \cdot \Delta]_{\tau,\sigma} = - [a \cdot \Delta,b \cdot \Delta]_{\tau,\sigma}. \label{skew-symmetry}
\intertext{Moreover if there exists an element $\delta \in \A$ such that}
\Delta \circ \tau^{-1} \circ \sigma \circ \tau^{-1} = \delta \cdot (\sigma \circ \tau^{-1} \circ \Delta \circ \tau^{-1}), \label{commutation-condition}
\intertext{then we have}
\circlearrowleft_{a,b,c} \Big( [\sigma(\tau^{-1}(a)) \cdot \Delta,[b \cdot \Delta,c \cdot \Delta]_{\tau,\sigma}]_{\tau,\sigma} + \delta \cdot [a \cdot \Delta,[b \cdot \Delta,c \cdot \Delta]_{\tau,\sigma}]_{\tau,\sigma} \Big) = 0. \label{quasi-Jacobi-identity}
\end{gather}
\end{theorem}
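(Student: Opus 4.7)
The plan is to split the statement into three parts: first prove the interior formula \eqref{interior-bracket} (from which both \eqref{skew-symmetry} and the fact that the bracket takes values in $\A\cdot\Delta$ are immediate), second verify well-definedness on $\A\cdot\Delta$, and third establish the quasi-Jacobi identity \eqref{quasi-Jacobi-identity}, which is the main step. For \eqref{interior-bracket} I would evaluate the composition $((\sigma\circ\tau^{-1})(a)\cdot\Delta)\circ(\tau^{-1}(b)\cdot\tau^{-1}\circ\Delta)$ at an arbitrary $x\in\A$, apply the $(\tau,\sigma)$-Leibniz rule to push $\Delta$ through the product $\tau^{-1}(b)\,\tau^{-1}(\Delta(x))$ (using $\tau\circ\tau^{-1}=id$), and observe that the term containing $\sigma(\tau^{-1}(b))\,\Delta(\tau^{-1}(\Delta(x)))$ becomes symmetric in $a,b$ once multiplied by the outer factor $(\sigma\circ\tau^{-1})(a)$, hence drops out upon antisymmetrization; what remains is exactly the scalar of \eqref{interior-bracket} times $\Delta(x)$, and \eqref{skew-symmetry} is then immediate.

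For well-definedness, $R$-bilinearity reduces the problem to showing $a\in\Ann(\Delta)\Rightarrow[a\cdot\Delta,b\cdot\Delta]_{\tau,\sigma}=0$. The first operator in \eqref{bracket-def} vanishes because hypothesis \eqref{annulation-condition} gives $(\sigma\circ\tau^{-1})(a)\in\Ann(\Delta)$; the second vanishes because $\tau^{-1}$ is an algebra morphism, so $\tau^{-1}(a)\,\tau^{-1}(\Delta(x))=\tau^{-1}(a\,\Delta(x))=0$ for every $x\in\A$.

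For \eqref{quasi-Jacobi-identity} I would transfer the result from the $\sigma$-derivation setting of \cite{HLS06}. By \autoref{Cor:One-to-Two_Der}, $\tilde\Delta:=\tau^{-1}\circ\Delta$ is a $(\tau^{-1}\circ\sigma)$-derivation; write $\tilde\sigma:=\tau^{-1}\circ\sigma$ and let $[\cdot,\cdot]_{\tilde\sigma}$ denote the Hartwig--Larsson--Silvestrov bracket on $\A\cdot\tilde\Delta$. The motivating computation displayed just before the theorem shows that $\Phi:X\mapsto\tau\circ X$ defines an $R$-linear bijection $\A\cdot\tilde\Delta\to\A\cdot\Delta$ sending $a\cdot\tilde\Delta$ to $\tau(a)\cdot\Delta$ and intertwining $[\cdot,\cdot]_{\tilde\sigma}$ with $[\cdot,\cdot]_{\tau,\sigma}$. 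Setting $y=\tau^{-1}(x)$ in \eqref{commutation-condition} and applying $\tau^{-1}$ to both sides yields $\tilde\Delta\circ\tilde\sigma=\delta'\cdot(\tilde\sigma\circ\tilde\Delta)$ with $\delta':=\tau^{-1}(\delta)$, which is precisely the hypothesis under which \cite{HLS06} derives a quasi-Jacobi identity for $[\cdot,\cdot]_{\tilde\sigma}$; moreover $\Phi$ converts scalar multiplication by $\delta'$ on $\A\cdot\tilde\Delta$ into scalar multiplication by $\delta$ on $\A\cdot\Delta$, and the identity $\tau\circ\tilde\sigma=\sigma$ matches the first slot $\tilde\sigma(\tau^{-1}(a))\cdot\tilde\Delta$ of the source identity with $\sigma(\tau^{-1}(a))\cdot\Delta$. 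Applying $\Phi$ to the quasi-Jacobi identity for $(\tilde\Delta,\tilde\sigma,\delta')$ therefore yields \eqref{quasi-Jacobi-identity}.

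The main obstacle is the coherent transfer of all three pieces of twisting data ($\tau,\sigma,\delta$) through $\Phi$; this amounts to a short but delicate set of algebraic checks, since the $\A$-module action, the endomorphism $\tau$, and the commutation scalar $\delta$ must each interact with $\Phi$ correctly. If a self-contained argument is preferred, one can alternatively expand both inner triple brackets by \eqref{interior-bracket}, use \eqref{commutation-condition} to commute each occurrence of $\Delta\circ\tau^{-1}$ past $\sigma\circ\tau^{-1}$, and verify that the cyclic sum of the resulting monomials in $a,b,c$ vanishes; the cancellations cut across distinct types of terms (second-order, mixed first-order, and ``boundary'' terms involving $\alpha(\partial a)+\partial a$-type factors), so the bookkeeping must be carried out collectively rather than type by type.
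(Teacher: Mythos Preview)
Your treatment of the interior formula \eqref{interior-bracket}, skew-symmetry, and well-definedness matches the paper's argument. For the quasi-Jacobi identity \eqref{quasi-Jacobi-identity}, however, you take a genuinely different route. The paper proceeds by a direct, self-contained expansion: it writes out both $[\sigma\tau^{-1}(a)\cdot\Delta,[b\cdot\Delta,c\cdot\Delta]]$ and $\delta\cdot[a\cdot\Delta,[b\cdot\Delta,c\cdot\Delta]]$ in full using \eqref{interior-bracket} and the $(\tau,\sigma)$-Leibniz rule, shows that after cyclic summation each reduces to a two-term expression, and checks that \eqref{commutation-condition} makes these two expressions cancel. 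Your approach instead transports the known $\sigma$-derivation quasi-Jacobi identity of \cite{HLS06} through the bijection $\Phi:\A\cdot\tilde\Delta\to\A\cdot\Delta$, $a\cdot\tilde\Delta\mapsto\tau(a)\cdot\Delta$, after converting \eqref{commutation-condition} into its $\tilde\sigma$-analogue with scalar $\delta'=\tau^{-1}(\delta)$. This is correct; you should also record that $\Ann(\tilde\Delta)=\tau^{-1}(\Ann(\Delta))$, so the annihilator hypothesis needed to invoke \cite{HLS06} for $[\cdot,\cdot]_{\tilde\sigma}$ follows from \eqref{annulation-condition} --- a one-line check you omit. Your argument is shorter and makes transparent that the theorem is precisely the $\sigma$-derivation result conjugated by $\tau$, at the price of depending on \cite{HLS06}; the paper's computation is longer but self-contained and displays explicitly which monomials survive and why they cancel. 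The ``alternative'' you sketch in your final paragraph is in fact exactly what the paper does.
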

We will refer to equation \eqref{quasi-Jacobi-identity} as quasi-Jacobi identity. \\

This theorem gives a quasi-Lie algebra structure on $\A \cdot \Delta$, with $\alpha = \sigma \circ \tau^{-1}$, $\beta = \delta$, $\theta = id_\A$ and $\omega = - id_\A$.

\begin{remark}
The bracket defined in the theorem depends on the maps $\tau$ and $\sigma$, as specified explicitly in the notation, but also depends on the generator $\Delta$ of the cyclic submodule $\A \cdot \Delta$ of $\mathcal{D}_{\tau,\sigma}(\A)$ and should be written $[~,~]_{\Delta,\tau,\sigma}$. Suppose that another generator $\Delta'$ of $\A \cdot \Delta$ is choosen, with $\Delta' = u \Delta,\ u \in \A$, then, like for the case of $\sigma$-derivations,  the `base-change' relation is given by
\[
\sigma(\tau^{-1}(u)) [a \cdot \Delta',b \cdot \Delta']_{\Delta',\tau,\sigma} = u [a \cdot \Delta',b \cdot \Delta']_{\Delta,\tau,\sigma}.
\]
The generator is fixed for the most part of this paper, so we suppress the dependence on the generator from the bracket notation. If $\A$ has no zero-divisors, then the result \cite[Proposition 9 p.15]{HLS06} that the dependence of the generator $\Delta$ is not essential still holds and is proven with similar arguments.

Furthermore, the bracket $[~,~]_{\tau,\sigma}$ will simply be written $[~,~]$ to lighten the computations, and the symbol of composition $\circ$ will often be omitted.
\end{remark}

\begin{proof}
We first show that $[~,~]$ is a well-defined function, that is, if $a_1 \cdot \Delta = a_2 \cdot \Delta$, then
\[
[a_1 \cdot \Delta,b \cdot \Delta] = [a_2 \cdot \Delta,b \cdot \Delta] \qquad \text{and} \qquad [b \cdot \Delta,a_1 \cdot \Delta] = [b \cdot \Delta,a_2 \cdot \Delta].
\]
Now $a_1 \cdot \Delta = a_2 \cdot \Delta$ is equivalent to $a_1 - a_2 \in \Ann(\Delta)$. Therefore, $\tau^{-1}(a_1 - a_2) \in \tau^{-1}(\Ann(\Delta)) = \Ann(\tau^{-1} \circ \Delta)$ and using \eqref{annulation-condition}, we also have $\sigma(\tau^{-1}(a_1 - a_2)) \in \Ann(\Delta)$. Hence
\begin{align*}
& [a_1 \cdot \Delta,b \cdot \Delta] - [a_2 \cdot \Delta,b \cdot \Delta] = \\
={}& (\sigma\tau^{-1}(a_1) \cdot \Delta) \circ (\tau^{-1}(b) \cdot \tau^{-1}\Delta) - (\sigma\tau^{-1}(b) \cdot \Delta) \circ (\tau^{-1}(a_1) \cdot \tau^{-1}\Delta) \\
&{}- (\sigma\tau^{-1}(a_2) \cdot \Delta) \circ (\tau^{-1}(b) \cdot \tau^{-1}\Delta) + (\sigma\tau^{-1}(b) \cdot \Delta) \circ (\tau^{-1}(a_2) \cdot \tau^{-1}\Delta) \\
={}& (\sigma\tau^{-1}(a_1 - a_2) \cdot \Delta) \circ (\tau^{-1}(b) \cdot \tau^{-1}\Delta) - (\sigma\tau^{-1}(b) \cdot \Delta) \circ (\tau^{-1}(a_1 - a_2) \cdot \tau^{-1}\Delta) = 0,
\end{align*}
and a similar computation for the second equality.

Next we prove \eqref{interior-bracket}, which also shows that $\A \cdot \Delta$ is closed under the bracket $[~,~]$. Let $a,b,c \in \A$, then, since $\Delta$ is a $(\tau,\sigma)$-derivation on $\A$ we have
\begin{align*}
& [a \cdot \Delta,b \cdot \Delta](c) = (\sigma\tau^{-1}(a)\cdot \Delta)\Big((\tau^{-1}(b) \cdot \tau^{-1}\Delta)(c)\Big) - (\sigma\tau^{-1}(b)\cdot \Delta)\Big((\tau^{-1}(a) \cdot \tau^{-1}\Delta)(c)\Big) \\
&= \sigma\tau^{-1}(a)\Delta\big(\tau^{-1}(b)\tau^{-1}\Delta(c)\big) - \sigma\tau^{-1}(b)\Delta\big(\tau^{-1}(a)\tau^{-1}\Delta(c)\big) \\
&= \sigma\tau^{-1}(a) \Big( \Delta\tau^{-1}(b)\Delta(c) + \sigma\tau^{-1}(b)\Delta\tau^{-1}(c) \Big) - \sigma\tau^{-1}(b) \Big( \Delta\tau^{-1}(a)\Delta(c) + \sigma\tau^{-1}(a)\Delta\tau^{-1}(c) \Big) \\
&= \big( \sigma\tau^{-1}(a)\Delta\tau^{-1}(b) - \sigma\tau^{-1}(b)\Delta\tau^{-1}(a) \big)\Delta(c) + \big( \sigma\tau^{-1}(a)\sigma\tau^{-1}(b) - \sigma\tau^{-1}(b)\sigma\tau^{-1}(a) \big) \Delta\tau^{-1}\Delta(c).
\end{align*}
Since $\A$ is commutative, the last term is zero, thus \eqref{interior-bracket} is true. The skew-symmetry identity \eqref{skew-symmetry} is clear from the definition \eqref{bracket-def}. Using the linearity of $\tau,\sigma,\Delta$ and the definition of $[~,~]$ or the formula \eqref{interior-bracket}, it is also easy to see that $[~,~]$ is bilinear. \\

It remains to prove \eqref{quasi-Jacobi-identity}. Using \eqref{interior-bracket} and that $\Delta$ is a $(\tau,\sigma)$-derivation on $\A$ we get
\begin{align*}
& [\sigma\tau^{-1}(a) \cdot \Delta,[b \cdot \Delta,c \cdot \Delta]] = [\sigma\tau^{-1}(a) \cdot \Delta,\Big( \sigma\tau^{-1}(b)\Delta\tau^{-1}(c) - \sigma\tau^{-1}(c)\Delta\tau^{-1}(b) \Big) \cdot \Delta] \\
={} & \bigg\{ (\sigma\tau^{-1})^2(a)\Delta\Big( \tau^{-1}\sigma\tau^{-1}(b)\tau^{-1}\Delta\tau^{-1}(c) - \tau^{-1}\sigma\tau^{-1}(c)\tau^{-1}\Delta\tau^{-1}(b) \Big) \\
& \qquad - \sigma\tau^{-1}\Big( \sigma\tau^{-1}(b)\Delta\tau^{-1}(c) - \sigma\tau^{-1}(c)\Delta\tau^{-1}(b) \Big) \Delta\tau^{-1}\sigma\tau^{-1}(a) \bigg\} \cdot \Delta \\
={} & \bigg\{ (\sigma\tau^{-1})^2(a) \Big( \Delta\tau^{-1}\sigma\tau^{-1}(b) \Delta\tau^{-1}(c) + (\sigma\tau^{-1})^2(b)(\Delta\tau^{-1})^2(c) \\
& \hspace*{6em} - \Delta\tau^{-1}\sigma\tau^{-1}(c)\Delta\tau^{-1}(b) - (\sigma\tau^{-1})^2(c)(\Delta\tau^{-1})^2(b) \Big) \\
& \quad - (\sigma\tau^{-1})^2(b)\sigma\tau^{-1}\Delta\tau^{-1}(c)\Delta\tau^{-1}\sigma\tau^{-1}(a) + (\sigma\tau^{-1})^2(c)\sigma\tau^{-1}\Delta\tau^{-1}(b)\Delta\tau^{-1}\sigma\tau^{-1}(a) \bigg\} \cdot \Delta \\
={} & \bigg\{ (\sigma\tau^{-1})^2(a)\Delta\tau^{-1}\sigma\tau^{-1}(b)\Delta\tau^{-1}(c) - (\sigma\tau^{-1})^2(a)\Delta\tau^{-1}\sigma\tau^{-1}(c)\Delta\tau^{-1}(b) \\
& \quad + (\sigma\tau^{-1})^2(a)(\sigma\tau^{-1})^2(b)(\Delta\tau^{-1})^2(c) - (\sigma\tau^{-1})^2(a)(\sigma\tau^{-1})^2(c)(\Delta\tau^{-1})^2(b) \\
& \quad - (\sigma\tau^{-1})^2(b)\sigma\tau^{-1}\Delta\tau^{-1}(c)\Delta\tau^{-1}\sigma\tau^{-1}(a) + (\sigma\tau^{-1})^2(c)\sigma\tau^{-1}\Delta\tau^{-1}(b)\Delta\tau^{-1}\sigma\tau^{-1}(a) \bigg\} \cdot \Delta.
\end{align*}
Summing cyclically on $a,b,c$, the second line of the last expression vanishes, and also the third line using \eqref{commutation-condition}, so we have
\begin{equation} \label{quasi-Jacobi-identity-first-part}
\begin{split}
& \circlearrowleft_{a,b,c} [\sigma\tau^{-1}(a) \cdot \Delta,[b \cdot \Delta,c \cdot \Delta]] = \\
={} & \circlearrowleft_{a,b,c} \Big\{ (\sigma\tau^{-1})^2(a)\Delta\tau^{-1}\sigma\tau^{-1}(b)\Delta\tau^{-1}(c) - (\sigma\tau^{-1})^2(a)\Delta\tau^{-1}\sigma\tau^{-1}(c)\Delta\tau^{-1}(b) \Big\} \cdot \Delta
\end{split}
\end{equation}
We now consider the second term of \eqref{quasi-Jacobi-identity}. First note that from \eqref{interior-bracket} we have
\[
[b \cdot \Delta,c \cdot \Delta] = \Big( \Delta\tau^{-1}(c)\sigma\tau^{-1}(b) - \Delta\tau^{-1}(b)\sigma\tau^{-1}(c) \Big) \cdot \Delta
\]
since $\A$ is commutative. Using this and \eqref{interior-bracket} we get
\begin{align*}
& \delta \cdot [a \cdot \Delta,[b \cdot \Delta,c \cdot \Delta]] = \delta \cdot [a \cdot \Delta,\Big( \Delta\tau^{-1}(c)\sigma\tau^{-1}(b) - \Delta\tau^{-1}(b)\sigma\tau^{-1}(c) \Big) \cdot \Delta] \\
={} & \delta \cdot \bigg\{ \sigma\tau^{-1}(a) \Delta\Big( \tau^{-1}\Delta\tau^{-1}(c)\tau^{-1}\sigma\tau^{-1}(b) - \tau^{-1}\Delta\tau^{-1}(b)\tau^{-1}\sigma\tau^{-1}(c) \Big) \\
& \qquad - \sigma\tau^{-1} \Big( \Delta\tau^{-1}(c)\sigma\tau^{-1}(b) - \Delta\tau^{-1}(b)\sigma\tau^{-1}(c) \Big) \Delta\tau^{-1}(a) \bigg\} \cdot \Delta \\
={} & \delta \cdot \bigg\{ \sigma\tau^{-1}(a) \Big( (\Delta\tau^{-1})^2(c)\sigma\tau^{-1}(b) + \sigma\tau^{-1}\Delta\tau^{-1}(c)\Delta\tau^{-1}\sigma\tau^{-1}(b) \\
& \hspace*{6em} - (\Delta\tau^{-1})^2(b)\sigma\tau^{-1}(c) - \sigma\tau^{-1}\Delta\tau^{-1}(b) \Delta\tau^{-1}\sigma\tau^{-1}(c) \Big) \\
& \qquad - \sigma\tau^{-1}\Delta\tau^{-1}(c)(\sigma\tau^{-1})^2(b)\Delta\tau^{-1}(a) + \sigma\tau^{-1}\Delta\tau^{-1}(b)(\sigma\tau^{-1})^2(c)\Delta\tau^{-1}(a) \bigg\} \cdot \Delta \\
={} & \bigg\{ \delta \sigma\tau^{-1}(a)(\Delta\tau^{-1})^2(c)\sigma\tau^{-1}(b) - \delta \sigma\tau^{-1}(a)(\Delta\tau^{-1})^2(b)\sigma\tau^{-1}(c) \\
& \quad + \delta \sigma\tau^{-1}(a)\sigma\tau^{-1}\Delta\tau^{-1}(c) \Delta\tau^{-1}\sigma\tau^{-1}(b) - \delta \sigma\tau^{-1}(a)\sigma\tau^{-1}\Delta\tau^{-1}(b) \Delta\tau^{-1}\sigma\tau^{-1}(c) \\
& \quad - \delta \sigma\tau^{-1}\Delta\tau^{-1}(c)(\sigma\tau^{-1})^2(b)\Delta\tau^{-1}(a) + \delta \sigma\tau^{-1}\Delta\tau^{-1}(b)(\sigma\tau^{-1})^2(c)\Delta\tau^{-1}(a) \bigg\} \cdot \Delta
\end{align*}
In this last expression, the second line vanishes using \eqref{commutation-condition} and commutativity, and summing cyclically on $a,b,c$, the first line also vanishes, so we have
\begin{equation}
\begin{split}
& \circlearrowleft_{a,b,c} \delta \cdot [a \cdot \Delta,[b \cdot \Delta,c \cdot \Delta]] = \\
={} & \circlearrowleft_{a,b,c} \Big\{ - \delta \sigma\tau^{-1}\Delta\tau^{-1}(c)(\sigma\tau^{-1})^2(b)\Delta\tau^{-1}(a) + \delta \sigma\tau^{-1}\Delta\tau^{-1}(b)(\sigma\tau^{-1})^2(c)\Delta\tau^{-1}(a) \Big\} \cdot \Delta
\end{split}
\end{equation}
which, using \eqref{commutation-condition}, is exactly the opposite of \eqref{quasi-Jacobi-identity-first-part}. We have thus proved the quasi-Jacobi identity \eqref{quasi-Jacobi-identity}.
\end{proof}

\begin{cor}
Under the same hypothesis, if $\sigma \tau^{-1}(\delta) = \delta$ and $\Delta(\delta) = 0$ (for example when $\delta = \lambda 1_\A$ is a multiple of the unit of $\A$), then the previous six-term quasi-Jacobi identity reduces to a Hom-Jacobi identity and $\A \cdot \Delta$ is a Hom-Lie algebra with linear twist map $\alpha = \sigma \tau^{-1} + \delta id_\A$.
\end{cor}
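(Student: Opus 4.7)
The plan is to define $\alpha : \A\cdot\Delta \to \A\cdot\Delta$ by $\alpha(a\cdot\Delta) = (\sigma\tau^{-1}(a) + \delta a)\cdot\Delta$, which is the natural action of $\sigma\tau^{-1} + \delta\,id_\A$ transported to $\A\cdot\Delta$, and show that $(\A\cdot\Delta, [~,~]_{\tau,\sigma}, \alpha)$ satisfies the Hom-Lie axioms. Well-definedness of $\alpha$ follows at once from \eqref{annulation-condition} together with the fact that $\Ann(\Delta)$ is an ideal of $\A$ and hence stable under multiplication by $\delta$. Skew-symmetry of the bracket is already \eqref{skew-symmetry}, so only the Hom-Jacobi identity remains to be proved.

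By $R$-bilinearity of the bracket,
\[
[\alpha(a\cdot\Delta), [b\cdot\Delta, c\cdot\Delta]]_{\tau,\sigma} = [\sigma\tau^{-1}(a)\cdot\Delta, [b\cdot\Delta, c\cdot\Delta]]_{\tau,\sigma} + [(\delta a)\cdot\Delta, [b\cdot\Delta, c\cdot\Delta]]_{\tau,\sigma},
\]
so, comparing with the left-hand side of \eqref{quasi-Jacobi-identity}, it suffices to establish the ``$\delta$-linearity'' identity $[(\delta a)\cdot\Delta, Y]_{\tau,\sigma} = \delta\cdot[a\cdot\Delta, Y]_{\tau,\sigma}$ for any $Y = y\cdot\Delta \in \A\cdot\Delta$. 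Using the explicit formula \eqref{interior-bracket}, the first summand on each side matches because $\sigma\tau^{-1}$ is an algebra morphism with $\sigma\tau^{-1}(\delta) = \delta$. The second summands involve $\Delta\tau^{-1}(\delta a) = \Delta(\tau^{-1}(\delta)\tau^{-1}(a))$; the $(\tau,\sigma)$-derivation rule expands this as $\Delta\tau^{-1}(\delta)\cdot a + \delta\,\Delta\tau^{-1}(a)$. The hypothesis $\Delta(\delta) = 0$ — which in the highlighted case $\delta = \lambda\cdot 1_\A$ becomes $\Delta\tau^{-1}(\delta) = \Delta(\delta) = 0$ since $\tau^{-1}$ fixes $\lambda\cdot 1_\A$ — kills the spurious term $\Delta\tau^{-1}(\delta)\cdot a$, and the claimed $\delta$-linearity follows.

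Substituting this back and taking the cyclic sum over $a,b,c$ identifies $\circlearrowleft_{a,b,c}[\alpha(a\cdot\Delta), [b\cdot\Delta, c\cdot\Delta]]_{\tau,\sigma}$ with the left-hand side of \eqref{quasi-Jacobi-identity}, which vanishes by \autoref{Thm:Bracket-Gen-Der}, so the Hom-Jacobi identity holds and the corollary is proved. The expected main obstacle is precisely the $\delta$-linearity step, because it hinges on $\Delta\tau^{-1}(\delta) = 0$. For $\delta = \lambda\cdot 1_\A$ this is immediate, but for a general $\delta$ satisfying only the two stated conditions one needs an auxiliary input: for instance, when $\Delta$ is the canonical rank-one generator $(\tau-\sigma)/g$ from \autoref{Thm:Rank-One-Module}, the relation $\sigma\tau^{-1}(\delta) = \delta$ forces $(\tau-\sigma)(\tau^{-1}(\delta)) = 0$ and therefore $\Delta\tau^{-1}(\delta) = 0$, restoring the argument in the generality the corollary is intended for.
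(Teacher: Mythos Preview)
Your approach is essentially the same as the paper's: both arguments reduce the Hom-Jacobi identity to the ``$\delta$-linearity'' relation $[\delta a\cdot\Delta,\,Y]_{\tau,\sigma}=\delta\cdot[a\cdot\Delta,\,Y]_{\tau,\sigma}$, obtained by expanding the bracket via \eqref{interior-bracket} and using $\sigma\tau^{-1}(\delta)=\delta$ together with the vanishing of the derivative of $\delta$, and then invoke the already-established quasi-Jacobi identity \eqref{quasi-Jacobi-identity}. The paper's proof is a one-line sketch (with visible typos in the displayed formula), whereas you spell the computation out; in particular you correctly isolate that the condition actually needed is $\Delta\tau^{-1}(\delta)=0$ rather than $\Delta(\delta)=0$, and you verify it in the scalar case $\delta=\lambda\,1_\A$ and in the canonical-generator case of \autoref{Thm:Rank-One-Module}, which is a useful sharpening of the paper's treatment.
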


\begin{proof}
Using the same kind of computation than in the previous proof to get \eqref{quasi-Jacobi-identity}, we obtain
\begin{align*}
[\delta a \cdot \Delta,[b \cdot \Delta,c \cdot \Delta]] ={}& \sigma \tau^{-1}(\delta) \cdot [a \cdot \Delta,[b \cdot \Delta,c \cdot \Delta]] \\
& - \Big( (\sigma \tau^{-1})^2(b) \sigma \tau^{-1} \Delta \tau^{-1}(c) - (\sigma \tau^{-1})^2(b) \sigma \tau^{-1} \Delta \tau^{-1}(c) \Big) \Delta(\delta) \tau(a).
\end{align*}
\end{proof}

When the algebra $\A$ is an unique factorization domain, there is automatically an element $\delta$ which satisfies the relation of `commutation' \eqref{commutation-condition}.

\begin{prop} \label{Prop:Bracket-Gen-Der-UFD}
If $\A$ is an unique factorization domain and $\tau,\sigma : \A \to \A$ are two different algebra endomorphisms with $\tau$ invertible, then the equation \eqref{commutation-condition},
\[
\forall\ a \in \A, \qquad \Delta(\tau^{-1}(\sigma(\tau^{-1}(a)))) = \delta \sigma(\tau^{-1}(\Delta(\tau^{-1}(a))))
\]
holds with
\begin{equation}
\delta = \frac{\sigma(\tau^{-1}(g))}{g},
\end{equation}
where $g = \gcd((\tau-\sigma)(\A))$.
\end{prop}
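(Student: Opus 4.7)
The plan is to use the explicit generator $\Delta = (\tau-\sigma)/g$ provided by \autoref{Thm:Rank-One-Module}, which turns both sides of \eqref{commutation-condition} into concrete expressions that can be compared directly.

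The first step I would take is to check that $\delta = \sigma(\tau^{-1}(g))/g$ is a well-defined element of $\A$, i.e.\ that $g$ divides $\sigma(\tau^{-1}(g))$ in the UFD $\A$. This is immediate from the defining property of $g$ as $\gcd((\tau-\sigma)(\A))$: applying the divisibility condition $g \mid (\tau-\sigma)(a)$ to the element $a = \tau^{-1}(g) \in \A$ yields $g \mid g - \sigma\tau^{-1}(g)$, and hence $g \mid \sigma\tau^{-1}(g)$. This is the only nontrivial use of the UFD hypothesis beyond the existence of $g$ itself.

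The second step is a direct verification of the identity. For $a \in \A$, set $b = \tau^{-1}(a)$. The left-hand side of \eqref{commutation-condition} expands via the formula for $\Delta$ as
\[
\Delta(\tau^{-1}\sigma(b)) \;=\; \frac{(\tau-\sigma)(\tau^{-1}\sigma(b))}{g} \;=\; \frac{\sigma(b) - \sigma\tau^{-1}\sigma(b)}{g}.
\]
For the right-hand side, I would apply the algebra endomorphism $\sigma\tau^{-1}$ to the identity $g\,\Delta(b) = (\tau-\sigma)(b)$ to obtain
\[
\sigma\tau^{-1}(g)\cdot \sigma\tau^{-1}(\Delta(b)) \;=\; \sigma(b) - \sigma\tau^{-1}\sigma(b),
\]
and then multiply by $\delta = \sigma\tau^{-1}(g)/g$; the factor $\sigma\tau^{-1}(g)$ cancels, leaving exactly the expression found on the LHS. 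Thus both sides coincide. The main (mild) obstacle is the divisibility check carried out in the first step; the remainder is a one-line fraction manipulation inside $\A$.
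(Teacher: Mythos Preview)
Your proof is correct and follows essentially the same approach as the paper: both use the explicit generator $\Delta=(\tau-\sigma)/g$, establish $g\mid\sigma\tau^{-1}(g)$ from $g\mid(\tau-\sigma)(\tau^{-1}(g))=g-\sigma\tau^{-1}(g)$, and then compare the two sides after clearing the denominator $g$ (using that $\A$ is a domain). Your divisibility check is in fact slightly more direct than the paper's, which routes the same step through the equality $(\tau-\sigma)(\A)=(\mathrm{id}-\sigma\tau^{-1})(\A)$ and the relation \eqref{gcd-subsets}; the only wording quibble is that in your last line you should say ``divide by $g$'' rather than ``multiply by $\delta$'', but the intended cancellation is clear.
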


\begin{proof}
In the case of an unique factorization domain $\A$ and $\tau,\sigma : \A \to \A$ two different algebra endomorphisms with $\tau$ invertible, by \autoref{Thm:Rank-One-Module} we have that
\[
\mathcal{D}_{\tau,\sigma}(\A) = \A \cdot \Delta
\]
where $\Delta = \dfrac{\tau - \sigma}{g}$ and $g = \gcd((\tau - \sigma)(\A))$. Let $y \in \A$ and set
\[
x \coloneqq \frac{\tau-\sigma}{g}(\tau^{-1}(y)) = \frac{y-\sigma(\tau^{-1}(y))}{g}.
\]
Then we have
\[
\sigma\tau^{-1}(g) \sigma\tau^{-1}(x) = \sigma(\tau^{-1}gx) = \sigma\tau^{-1}(\tau - \sigma)(\tau^{-1}(y)) = (\tau - \sigma)\tau^{-1}\sigma(\tau^{-1}(y)) = \sigma\tau^{-1}(y) - (\sigma\tau^{-1})^2(y).
\]
Since $g = \gcd((\tau-\sigma)(\A))$ and $(\tau-\sigma)(\A) = (id-\sigma\tau^{-1})(\A)$, using \eqref{gcd-subsets}, we have that $g$ is also a $\gcd$ of $(id-\sigma\tau^{-1})(\A)$ so $g~|~(id-\sigma\tau^{-1})(g) = g - \sigma(\tau^{-1}(g))$ and hence $g~|~\sigma\tau^{-1}(g)$. Then, dividing by $g$ and substitute the expression for $x$ we obtain
\[
\frac{\sigma\tau^{-1}(g)}{g} \sigma\tau^{-1}\left(\frac{\tau-\sigma}{g}\right)(\tau^{-1}(y)) = \frac{\tau-\sigma}{g}\tau^{-1}\sigma(\tau^{-1}(y))
\]
or, with $\Delta = \dfrac{\tau - \sigma}{g}$, this rewrites
\( \displaystyle
\frac{\sigma\tau^{-1}(g)}{g} \sigma\tau^{-1}\Delta\tau^{-1}(y) = \Delta\tau^{-1}\sigma\tau^{-1}(y).
\)
This shows that \eqref{commutation-condition} holds with
\[
\delta = \sigma\tau^{-1}(g)/g.
\]
\end{proof}

Since $\A$ has no zero-divisors and $\sigma \neq \tau$, it follows that $\Ann(\Delta) = 0$, and so \eqref{annulation-condition} is clearly true. Hence we can use \autoref{Thm:Bracket-Gen-Der} to define an algebra structure on $\mathcal{D}_{\tau,\sigma}(\A) = \A \cdot \Delta$ which satisfies \eqref{skew-symmetry} and \eqref{quasi-Jacobi-identity} with $\delta = \sigma\tau^{-1}(g)/g$.

\begin{remark} \label{Rmk:BaseChangeRelation}
The choice of the greatest common divisor is ambiguous. (We can choose any associated element, that is, the greatest common divisor is only unique up to a multiple by an invertible element). So this $\delta$ can be replaced by any $\delta' = u\delta$ where $u$ is a unit (that is, an invertible element). The derivation $\Delta$ is then replaced by an associated $\Delta'$. If $g'$ is another greatest common divisor related to $g$ by $g' = ug$, then
\[
\delta' = \frac{\sigma\tau^{-1}(ug)}{ug} = \frac{\sigma\tau^{-1}(u) \sigma\tau^{-1}(g)}{ug} = \frac{\sigma\tau^{-1}(u)}{u} \delta
\]
and $\sigma\tau^{-1}(u)/u$ is clearly a unit since $u$ is a unit. Therefore \eqref{commutation-condition} changes because
\[
\Delta' = \frac{\Delta}{u} = \frac{\tau - \sigma}{ug} \quad \Rightarrow \quad \Delta'\tau^{-1} \sigma\tau^{-1} = \frac{\sigma\tau^{-1}(u)}{u}\delta \cdot (\sigma\tau^{-1}\Delta'\tau^{-1}).
\]
\end{remark}

\subsection{Forced interior bracket}

For $\sigma$-derivations, the bracket defines a product on $\A \cdot \Delta$ since
\[
[a \cdot \Delta,b \cdot \Delta]_\sigma = (\sigma(a)\Delta(b)-\sigma(b)\Delta(a)) \cdot \Delta.
\]

For $a,b \in \A$, define $[a \cdot \Delta,b \cdot \Delta]'_{\tau,\sigma}$ by the same formula. The morphism $\tau$ does not appear in the formula, but the operator $\Delta$ is a $(\tau,\sigma)$-derivation. We get
\begin{equation}
[a \cdot \Delta,b \cdot \Delta]'_{\tau,\sigma} = (\sigma(a)\Delta(b)-\sigma(b)\Delta(a)) \cdot \Delta.
\end{equation}
In this case, $\A \cdot \Delta$ is closed under the bracket $[~,~]'_{\tau,\sigma}$ by definition. 

We want this bracket to satisfy a quasi-Jacobi identity. For $a,b,c \in \A$ we have,
\begin{align*}
\circlearrowleft_{a,b,c} & [\sigma(a) \cdot \Delta,[b \cdot \Delta,c \cdot \Delta]'_{\tau,\sigma}]'_{\tau,\sigma} \\
&= \circlearrowleft_{a,b,c} \bigg( \sigma^2(a) \Delta(\sigma(b)) \tau(\Delta(c)) - \sigma^2(a) \Delta(\sigma(c)) \tau(\Delta(b)) \\
& \hspace*{4em} - \sigma^2(b)\sigma(\Delta(c))\Delta(\sigma(a)) + \sigma^2(c)\sigma(\Delta(b))\Delta(\sigma(a)) \bigg) \cdot \Delta
\end{align*}
and computing the second bracket the other way,
\begin{align*}
\circlearrowleft_{a,b,c} & [\tau(a) \cdot \Delta,[b \cdot \Delta,c \cdot \Delta]'_{\tau,\sigma}]'_{\tau,\sigma} \\
&= \circlearrowleft_{a,b,c} \bigg( \sigma(\tau(a)) \Delta^2(c) \tau(\sigma(b)) - \sigma(\tau(a)) \Delta^2(b) \tau(\sigma(c)) \\
& \hspace*{4em} \sigma(\tau(a))\sigma(\Delta(c))\Delta(\sigma(b)) - \sigma(\tau(a))\sigma(\Delta(b))\Delta(\sigma(c)) \\
& \hspace*{4em} - \sigma(\Delta(c)) \sigma^2(b) \Delta(\tau(a)) + \sigma(\Delta(b)) \sigma^2(c) \Delta(\tau(a)) \bigg) \cdot \Delta.
\end{align*}

Suppose that there is an element $\delta \in \A$ such that for all $a \in \A,\ \Delta(\sigma(a)) = \delta \sigma(\Delta(a))$, then the second line in the first and second expression vanishes, doing the cyclic summation. If $\sigma$ and $\tau$ commutes \ie $\sigma \circ \tau = \tau \circ \sigma$, then the first line in the second expression vanishes, and finally, if we also have $\Delta(\tau(a)) = \delta \tau(\Delta(a))$ for all $a \in \A$, then the remaining first line in the first expression is the opposite of the last remaining line in the second expression when doing the cyclic summation.

So finally, with the conditions $\sigma \circ \tau = \tau \circ \sigma$, $\Delta(\sigma(a)) = \delta \sigma(\Delta(a))$ and $\Delta(\tau(a)) = \delta \tau(\Delta(a))$ for all $a \in \A$, we have
\[
\circlearrowleft_{a,b,c} [(\sigma+\tau)(a) \cdot \Delta,[b \cdot \Delta,c \cdot \Delta]'_{\tau,\sigma}]'_{\tau,\sigma} = 0
\]

We can sum this up in the following theorem.
\begin{theorem} \label{Thm:Forced-Bracket}
Let $\A$ be a commutative associative unital algebra and $\Delta$ be a $(\tau,\sigma)$-derivation of $\A$ with algebra morphisms $\sigma$, $\tau$ satisfying, for all $a \in \A$
\begin{equation} \label{Hom-Lie-commutation-condition}
\sigma(\tau(a)) = \tau(\sigma(a)) \qquad
\begin{aligned}
\Delta(\sigma(a)) &= \delta \sigma(\Delta(a)) \\
\Delta(\tau(a)) &= \delta \tau(\Delta(a))
\end{aligned}
\end{equation}
with $\delta \in \A$. The bracket $[~,~]'_{\tau,\sigma} : \A \cdot \Delta \times \A \cdot \Delta \to \A \cdot \Delta$ defined for $a,b \in \A$ by
\[
[a \cdot \Delta,b \cdot \Delta]'_{\tau,\sigma} = (\sigma(a)\Delta(b)-\sigma(b)\Delta(a)) \cdot \Delta
\]
endows the space $(\A \cdot \Delta,[~,~]'_{\tau,\sigma},\overline{\sigma + \tau})$ with a structure of Hom-Lie algebra.
\end{theorem}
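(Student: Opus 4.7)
The plan is to verify the three defining properties of a Hom-Lie algebra structure on the $R$-module $\A\cdot\Delta$: bilinearity, skew-symmetry of the bracket, and the Hom-Jacobi identity with twist $\alpha=\overline{\sigma+\tau}$, where $\overline{\sigma+\tau}(a\cdot\Delta)=(\sigma(a)+\tau(a))\cdot\Delta$. Closedness of the bracket on $\A\cdot\Delta$ and bilinearity are built directly into the defining formula $[a\cdot\Delta,b\cdot\Delta]'_{\tau,\sigma}=(\sigma(a)\Delta(b)-\sigma(b)\Delta(a))\cdot\Delta$, using $R$-linearity of $\sigma$, $\tau$ and $\Delta$. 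Skew-symmetry follows because swapping $a$ and $b$ changes the sign of $\sigma(a)\Delta(b)-\sigma(b)\Delta(a)$. The substantive content is therefore the Hom-Jacobi identity
\[
\circlearrowleft_{a,b,c}\bigl[(\sigma+\tau)(a)\cdot\Delta,\,[b\cdot\Delta,c\cdot\Delta]'_{\tau,\sigma}\bigr]'_{\tau,\sigma}=0.
\]

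By bilinearity of the bracket in its first argument, this cyclic sum splits as the sum of the two cyclic sums obtained by replacing $(\sigma+\tau)(a)$ by $\sigma(a)$, respectively $\tau(a)$, and both of these have already been expanded explicitly in the discussion immediately preceding the theorem. I would therefore take those expansions as the starting point and verify the required term-by-term cancellations. Each of the three hypotheses of \eqref{Hom-Lie-commutation-condition} plays a distinct role. The twisted intertwining $\Delta\circ\sigma=\delta\cdot(\sigma\circ\Delta)$ moves $\Delta$ past $\sigma$ at the cost of the scalar $\delta$; after this substitution, one row of each of the two expanded cyclic sums becomes antisymmetric in two of the three variables (thanks to commutativity of $\A$) and hence vanishes upon cyclic summation. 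The commutation $\sigma\circ\tau=\tau\circ\sigma$ performs the analogous cancellation for one row of the $\tau$-sum. Finally, the second twisted intertwining $\Delta\circ\tau=\delta\cdot(\tau\circ\Delta)$ rewrites the surviving row of the $\sigma$-sum into the exact negative of the surviving row of the $\tau$-sum under cyclic permutation, producing the last cancellation.

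The main obstacle is pure bookkeeping: each expanded cyclic sum contains several monomials of the form $\sigma^{i}\tau^{j}\Delta^{k}$ applied to $a$, $b$, $c$, and one must consistently track which monomial is killed by which hypothesis when the cyclic sum is taken. No additional structural ingredient beyond $R$-linearity of $\sigma,\tau,\Delta$ and the commutativity of $\A$ enters the argument. Combining the three cancellations with bilinearity and skew-symmetry shows that $(\A\cdot\Delta,[~,~]'_{\tau,\sigma},\overline{\sigma+\tau})$ satisfies the Hom-Jacobi identity and is therefore a Hom-Lie algebra, as claimed.
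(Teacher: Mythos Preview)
Your proposal is correct and follows essentially the same approach as the paper: the paper's ``proof'' of this theorem is precisely the computation displayed just before the theorem statement, and you have accurately summarized how each hypothesis of \eqref{Hom-Lie-commutation-condition} is used to kill the appropriate rows of the two expanded cyclic sums. One very minor imprecision: the final cancellation between the surviving row of the $\sigma$-sum and the surviving row of the $\tau$-sum actually requires \emph{both} intertwining relations $\Delta\sigma=\delta\,\sigma\Delta$ and $\Delta\tau=\delta\,\tau\Delta$ (the first to rewrite $\Delta(\sigma(b))$ in the $\sigma$-row, the second to rewrite $\Delta(\tau(a))$ in the $\tau$-row), not just the second one as your wording suggests.
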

Here we have extended maps $\alpha : \A \to \A$ on $\mathcal{D}_{\tau,\sigma}(\A)$ by $\overline{\alpha}(a \cdot D) = \alpha(a) \cdot D$.

\begin{cor}
Under the same hypothesis, by symmetry, the bracket $[~,~]''_{\tau,\sigma} : \A \cdot \Delta \times \A \cdot \Delta \to \A \cdot \Delta$ defined for $a,b \in \A$ by
\[
[a \cdot \Delta,b \cdot \Delta]''_{\tau,\sigma} = (\tau(a)\Delta(b)-\tau(b)\Delta(a)) \cdot \Delta
\]
endows the space $(\A \cdot \Delta,[~,~]''_{\tau,\sigma},\overline{\tau + \sigma})$ with a structure of Hom-Lie algebra.
\end{cor}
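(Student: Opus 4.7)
The plan is to deduce this corollary directly from Theorem~\ref{Thm:Forced-Bracket} by exchanging the roles of $\tau$ and $\sigma$, so no new computation is needed.

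First, I would invoke the Remark from Section~\ref{Sec:preliminaries}: since $\A$ is commutative, any $(\tau,\sigma)$-derivation $\Delta$ is at the same time a $(\sigma,\tau)$-derivation, because
\[
\Delta(ab)=\Delta(a)\tau(b)+\sigma(a)\Delta(b)=\Delta(a)\sigma(b)+\tau(a)\Delta(b).
\]
So the data $(\A,\Delta,\tau,\sigma)$ may equivalently be regarded as the data $(\A,\Delta,\sigma,\tau)$, with the roles of the two twisting endomorphisms swapped.

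Next, I would check that the hypotheses of Theorem~\ref{Thm:Forced-Bracket} are symmetric in $(\tau,\sigma)$. Indeed, the condition $\sigma\circ\tau=\tau\circ\sigma$ is manifestly symmetric, and the pair of intertwining relations
\[
\Delta(\sigma(a))=\delta\,\sigma(\Delta(a)),\qquad \Delta(\tau(a))=\delta\,\tau(\Delta(a))
\]
is invariant under interchanging $\tau$ and $\sigma$ (with the same element $\delta\in\A$). Therefore Theorem~\ref{Thm:Forced-Bracket}, applied to $\Delta$ viewed as a $(\sigma,\tau)$-derivation, produces a Hom-Lie algebra structure on $\A\cdot\Delta$ with bracket
\[
[a\cdot\Delta,b\cdot\Delta]=\bigl(\tau(a)\Delta(b)-\tau(b)\Delta(a)\bigr)\cdot\Delta
\]
and twist map $\overline{\tau+\sigma}=\overline{\sigma+\tau}$. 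This is precisely the content of the corollary, so the proof is complete.

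There is essentially no obstacle: the entire argument rests on observing that commutativity of $\A$ makes the definition of a twisted derivation symmetric in its two twists, and that the hypotheses \eqref{Hom-Lie-commutation-condition} of Theorem~\ref{Thm:Forced-Bracket} are themselves symmetric in $\tau$ and $\sigma$. If one preferred a direct proof, one could instead repeat the cyclic-sum computation done just before Theorem~\ref{Thm:Forced-Bracket}, now collecting terms in $\tau$ rather than $\sigma$; the vanishing of the cyclic sums uses exactly the same three conditions, yielding the same Hom-Jacobi identity with twist $\overline{\tau+\sigma}$ and skew-symmetry built in from the antisymmetric definition of the bracket.
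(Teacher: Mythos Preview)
Your proposal is correct and is precisely the symmetry argument the paper invokes: the corollary is stated without proof, the words ``by symmetry'' being the entire justification. Your elaboration---that commutativity of $\A$ makes $\Delta$ a $(\sigma,\tau)$-derivation as well, and that the hypotheses \eqref{Hom-Lie-commutation-condition} are invariant under swapping $\tau$ and $\sigma$---spells out exactly what the paper leaves implicit.
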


\bigskip
We look again the case when $\A$ is a unique factorization domain, with $\tau,\sigma : \A \to \A$ two different algebra endomorphisms. The \autoref{Thm:Rank-One-Module} again gives
\[
\mathcal{D}_{\tau,\sigma}(\A) = \A \cdot \Delta
\]
where $\Delta = \dfrac{\tau - \sigma}{g}$ and $g = \gcd((\tau - \sigma)(\A))$. 

\begin{prop}
If $\A$ is an unique factorization domain and $\tau,\sigma : \A \to \A$ are two different algebra endomorphisms with $\tau \sigma = \sigma \tau$, then equations \eqref{Hom-Lie-commutation-condition},
\begin{align*}
\Delta(\sigma(a)) &= \delta \sigma(\Delta(a)) \\
\Delta(\tau(a)) &= \delta \tau(\Delta(a))
\end{align*}
hold only if
\begin{equation}
\delta = \frac{\sigma(g)}{g} = \frac{\tau(g)}{g},
\end{equation}
where $g = \gcd((\tau-\sigma)(\A))$.
\end{prop}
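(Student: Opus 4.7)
The plan is to exploit the explicit generator from \autoref{Thm:Rank-One-Module}, which gives $\mathcal{D}_{\tau,\sigma}(\A) = \A \cdot \Delta$ with $\Delta = (\tau - \sigma)/g$. I will substitute this formula into the hypothesised identities $\Delta(\sigma(a)) = \delta\, \sigma(\Delta(a))$ and $\Delta(\tau(a)) = \delta\, \tau(\Delta(a))$ and compare both sides directly. The commutation $\tau\sigma = \sigma\tau$ is what lets the factor $\sigma$ (respectively $\tau$) pass through $\tau - \sigma$, which is essentially the whole content.

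First I would compute the left-hand side of $\Delta(\sigma(a)) = \delta\, \sigma(\Delta(a))$ as
\[
\Delta(\sigma(a)) = \frac{(\tau-\sigma)(\sigma(a))}{g} = \frac{\sigma\big((\tau-\sigma)(a)\big)}{g},
\]
using $\tau\sigma = \sigma\tau$. For the right-hand side, I would observe that $\Delta(a) \in \A$ satisfies $g\,\Delta(a) = (\tau-\sigma)(a)$; applying the ring homomorphism $\sigma$ yields $\sigma(g)\,\sigma(\Delta(a)) = \sigma\big((\tau-\sigma)(a)\big)$. Combining the two computations and clearing denominators, the hypothesis becomes
\[
\big(\sigma(g) - \delta\, g\big)\, \sigma\big((\tau-\sigma)(a)\big) = 0 \quad \text{for all } a \in \A.
\]

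Because $\A$ is a unique factorization domain (hence an integral domain) and $\tau \neq \sigma$, there exists $a \in \A$ with $(\tau-\sigma)(a) \neq 0$; barring the degenerate situation in which $\sigma$ annihilates the entire image of $\tau-\sigma$ (which would make $\sigma(g)=0$ and the expression $\sigma(g)/g$ vacuous), one can pick $a$ so that $\sigma((\tau-\sigma)(a)) \neq 0$, and cancellation in the integral domain yields $\sigma(g) = \delta\, g$, that is, $\delta = \sigma(g)/g$. Repeating the entire argument with $\tau$ in place of $\sigma$ (and using $\sigma\tau = \tau\sigma$ in the symmetric direction) yields $\delta = \tau(g)/g$.

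The step I expect to be the main obstacle is justifying the cancellation rigorously: one must verify that the exceptional case $\sigma\big((\tau-\sigma)(\A)\big) = 0$ (or the analogous case for $\tau$) is excluded by the very fact that $\delta$ is well-defined and the identities are nontrivial, since otherwise both sides of the equation vanish identically and $\delta$ is unconstrained. Once this is dispatched, the remaining content is purely algebraic manipulation made straightforward by the explicit form of $\Delta$ and by the commutativity of $\tau$ and $\sigma$.
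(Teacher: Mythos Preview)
Your argument is correct and follows essentially the same route as the paper: both substitute the explicit generator $\Delta = (\tau-\sigma)/g$, use $\tau\sigma=\sigma\tau$ to pull $\sigma$ (resp.\ $\tau$) through $\tau-\sigma$, and arrive at the identity $g\,\Delta(\sigma(a)) = \sigma(g)\,\sigma(\Delta(a))$, from which $\delta = \sigma(g)/g$ follows. Your write-up is in fact tidier about the logical direction (the ``only if'') and the cancellation step than the paper's own proof, which phrases the divisibility $g \mid \sigma(g)$ as an additional supposition rather than a consequence.
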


\begin{proof}
Let $y \in \A$ and set
\[
x \coloneqq \frac{\tau-\sigma}{g}(y) = \frac{\tau(y)-\sigma(y)}{g}.
\]
Since $\tau$ and $\sigma$ commute, we have
\begin{align*}
\sigma(g)\sigma(x) &= \sigma(gx) = \sigma(\tau(y)) - \sigma^2(y) = (\tau - \sigma)(\sigma(y)) \\
\tau(g)\tau(x) &= \tau(gx) = \tau^2(y) - \tau(\sigma(y)) = (\tau - \sigma)(\tau(y))
\end{align*}
Since $g~|~\tau(g)-\sigma(g)$ by definition, if $g$ divides one of $\tau(g)$ or $\sigma(g)$, it will also divide the other. Suppose that it is the case, so dividing by $g$ in the previous equalities and substitute the expression for $x$ gives
\begin{align}
\frac{\sigma(g)}{g}\sigma\left( \frac{\tau-\sigma}{g}(y) \right) &= \frac{\tau-\sigma}{g}(\sigma(y)) \Leftrightarrow \frac{\sigma(g)}{g}\sigma(\Delta(y)) = \Delta(\sigma(y)) \\
\frac{\tau(g)}{g}\tau\left( \frac{\tau-\sigma}{g}(y) \right) &= \frac{\tau-\sigma}{g}(\sigma(y)) \Leftrightarrow \frac{\tau(g)}{g}\tau(\Delta(y)) = \Delta(\tau(y))
\end{align}
\end{proof}

So if $\sigma$ and $\tau$ commute and $\sigma(g)/g = \tau(g)/g = \delta$, we can use the \autoref{Thm:Forced-Bracket} to define a Hom-Lie algebra structure on $\mathcal{D}_{\tau,\sigma}(\A) = \A \cdot \Delta$ when $\A$ is a unique factorization domain.

\section{Examples with Laurent polynomials} \label{Sec:Examples}

\subsection{\texorpdfstring{$(p,q)$-deformations of Witt algebra}{(p,q)-deformations of Witt algebra}} \label{Sec:Witt-pq}

We consider the complex algebra $\A$ of Laurent polynomials in one variable $t$
\[ \A = \complex[t,t^{-1}]. \]

For $p,q \in \complex^*$ fixed, with $p \neq q$, define two endomorphisms of $\A$, $\tau$ and $\sigma$, by
\[ \tau(t)=pt, \quad \sigma(t) = qt. \]

From $\tau$ and $\sigma$ being endomorphisms it also follows that
\[ \tau(f(t))=f(pt), \qquad \sigma(f(t)) = f(qt) \quad \text{ for } f \in \A. \]
The algebra $\A$ is a unique factorization domain and by \autoref{Thm:Rank-One-Module} the set of $(\tau,\sigma)$-derivations $\mathcal{D}_{\tau,\sigma}(\A)$ is a free $\A$-module of rank one generated by the mapping
\[ D(f(t)) = \left(\frac{\tau-\sigma}{p-q}\right)(f(t)) = \frac{f(pt)-f(qt)}{p-q} = t \frac{f(pt)-f(qt)}{pt-qt} = t \frac{\tau(f(t))-\sigma(f(t))}{\tau(t)-\sigma(t)}. \]
Notice that the last equalities shows that $t^{-1}D = D_{p,q}$, the Jackson derivative. To see that $D$ is indeed a generator, we have that $g \coloneqq \gcd((\tau-\sigma)(\A)) = p - q$, because a $\gcd$ of $(\tau - \sigma)(\A)$ is any element of the form $c t^k$ with $c \in \complex^*$ and $k \in \intg$ since it divides $(\tau - \sigma)(t) = (p-q)t$ which is a unit (with $p \neq q$).

A general formula for $D$ acting on a monomial is as follows
\begin{equation}
D(t^n) = \frac{p^n - q^n}{p-q} t^n = [n]_{p,q} t^n
\end{equation}
where $[n]_{p,q}$ is the $(p,q)$-number defined for $n \in \intg$ and $p\neq q$ by $\displaystyle [n]_{p,q} \coloneqq \dfrac{p^n - q^n}{p-q}$. We will often omit the subscripts when the context makes it clear.

For $n \in \nat$ we have
\begin{equation}
[n] = \sum_{k=0}^{n-1} p^{n-1-k}q^k \qquad \text{and} \qquad [-n] = - (pq)^{-n} [n] = - \sum_{k=0}^{n-1} p^{-1-k}q^{k-n},
\end{equation}
which allows to define $[n]_{p,p} = n p^{n-1}$ for $n \in \intg$ when $q=p$.

Since $\sigma|_\complex = \tau|_\complex = id$, we have that $\delta = \sigma(\tau^{-1}(g))/g = 1$ so we can use the \autoref{Thm:Bracket-Gen-Der} to define the bracket, for $f,g \in \A$, by
\begin{align*}
[f \cdot D,g \cdot D] & \coloneqq (\sigma\tau^{-1}(f) \cdot D) \circ \tau^{-1} \circ (g \cdot D) - (\sigma\tau^{-1}(g) \cdot D) \circ \tau^{-1} \circ (f \cdot D) \\
& = (f(qp^{-1}t) \cdot D) \circ (g(p^{-1}t) \cdot \tau^{-1} \circ D) - (g(qp^{-1}t) \cdot D) \circ (f(p^{-1}t) \cdot \tau^{-1} \circ D).
\end{align*}

As a $\complex$-linear space, $\mathcal{D}_{\tau,\sigma}(\A)$ has a basis $\{d_n,\ n \in \intg\}$
\[
\mathcal{D}_{\tau,\sigma}(\A) = \bigoplus_{n \in \intg} \complex \cdot d_n \qquad \text{where} \qquad d_n = -t^n \cdot D.
\]
Then the bracket $[~,~]$ on $\mathcal{D}_{\tau,\sigma}(\A)$ can be rewritten as following, defined on generators by
\[
[d_n,d_m] = q^n p^{-n} d_n \circ \tau^{-1} \circ d_m - q^m p^{-m} d_m \circ \tau^{-1} \circ d_n
\]
with commutation relations
\[
[d_n,d_m] = \left( \frac{[n]}{p^n} - \frac{[m]}{p^m} \right) d_{n+m}.
\]
Indeed, we have
\begin{align*}
[d_n,d_m] & = \big(q^n p^{-n} (-t^n) D(-p^{-m}t^m) - q^m p^{-m} (-t^m) D(-p^{-n}t^n)\big) \cdot D \\
& = \left(q^m \frac{p^n-q^n}{p-q} - q^n \frac{p^m-q^m}{p-q}\right) \frac{1}{p^{n+m}} (-t^{n+m} \cdot D) \\
& = \frac{q^m p^{-m}-q^n p^{-n}}{p-q} d_{n+m} = \left(\frac{[n]}{p^n} - \frac{[m]}{p^m}\right) d_{n+m}.
\end{align*}
This bracket also satisfies skew-symmetry
\[
[d_m,d_n] = - [d_n,d_m]
\]
and the Hom-Jacobi identity
\[
\circlearrowleft_{n,m,k} \left[\left(q^n p^{-n}+1\right)d_n,\left[d_m,d_k\right]\right] = 0.
\]

We summarize our findings in a theorem.
\begin{theorem} \label{Thm:Witt-pq}
Let $\A = \complex[t,t^{-1}]$, $\tau(t)=pt,\ \sigma(t)=qt$ morphisms of $\A$ and $D = \dfrac{\tau-\sigma}{p-q}$ a $(\tau,\sigma)$-derivation of $\mathcal{D}_{\tau,\sigma}(\A)$. Then the $\complex$-linear space
\[
W_{p,q} = \bigoplus_{n \in \intg} \complex \cdot d_n,
\]
where $d_n = -t^n \cdot D$, can be endowed of a structure of Hom-Lie algebra with the bracket defined on generators by
\begin{equation}
[d_n,d_m] = q^n p^{-n} d_n \circ \tau^{-1} \circ d_m - q^m p^{-m} d_m \circ \tau^{-1} \circ d_n
\end{equation}
with commutation relations
\begin{equation}
[d_n,d_m] = \left( \frac{[n]}{p^n} - \frac{[m]}{p^m} \right) d_{n+m},
\end{equation}
and with the twist morphism $\alpha = id + \overline{\sigma} \overline{\tau^{-1}} : W_{p,q} \to W_{p,q}$ defined by
\begin{equation}
\alpha(d_n) = (1+(q/p)^n)d_n.
\end{equation}
The bracket satisfies skew-symmetry $[d_m,d_n] = - [d_n,d_m]$ and the Hom-Jacobi identity
\begin{equation}
\circlearrowleft_{n,m,k} \left[\left(1+(q/p)^n\right)d_n,\left[d_m,d_k\right]\right] = 0.
\end{equation}
\end{theorem}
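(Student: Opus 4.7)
The plan is to specialize \autoref{Thm:Bracket-Gen-Der}, together with its corollary that collapses the quasi-Jacobi identity to a Hom-Jacobi identity, to the concrete data $\A=\complex[t,t^{-1}]$, $\tau(t)=pt$, $\sigma(t)=qt$, $\Delta=D$. Almost everything is then either an application of the general machinery of \autoref{Sec:Bracket-st-der} or a short direct computation on monomials.

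First I would check that the hypotheses of \autoref{Thm:Bracket-Gen-Der} are met. Since $\tau$ sends $t$ to $pt$ with $p\neq 0$, it is invertible with $\tau^{-1}(t)=p^{-1}t$. Because $\A$ is a unique factorization domain without zero-divisors and $\tau\neq\sigma$, \autoref{Thm:Rank-One-Module} gives $\mathcal{D}_{\tau,\sigma}(\A)=\A\cdot D$ and forces $\Ann(D)=0$, so the annihilator condition \eqref{annulation-condition} is automatic. By \autoref{Prop:Bracket-Gen-Der-UFD} the commutation condition \eqref{commutation-condition} holds with $\delta=\sigma(\tau^{-1}(g))/g$; here $g=p-q$ is a nonzero complex scalar and $\tau,\sigma$ restrict to the identity on $\complex$, so $\delta=1$.

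Since $\delta=1_{\A}$ one has both $\sigma\tau^{-1}(\delta)=\delta$ and $D(\delta)=0$, so the corollary immediately following \autoref{Thm:Bracket-Gen-Der} applies: the six-term quasi-Jacobi identity reduces to the Hom-Jacobi identity with twist $\alpha=\overline{\sigma\tau^{-1}}+\mathrm{id}$. At this point $\A\cdot D$ already carries the claimed Hom-Lie algebra structure; bilinearity and skew-symmetry \eqref{skew-symmetry} are inherited directly from the theorem.

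It then remains to translate the abstract formulas onto the basis $d_n=-t^n\cdot D$. For the commutation relation I would apply \eqref{interior-bracket} with $a=-t^n$ and $b=-t^m$, using $\sigma\tau^{-1}(-t^n)=-(q/p)^n t^n$ and $D\tau^{-1}(-t^m)=D(-p^{-m}t^m)=-\tfrac{[m]}{p^m}t^m$, then collect the result as a multiple of $d_{n+m}$. The operator-form expression of the bracket is just \eqref{bracket-def} after pulling the scalar factors $q^n p^{-n}$ and $p^{-m}$ out of the module action, so that the two compositions regroup as $q^n p^{-n}\,d_n\circ\tau^{-1}\circ d_m$ and its transpose. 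The twist formula $\alpha(d_n)=(1+(q/p)^n)d_n$ reads off immediately from $\overline{\sigma\tau^{-1}}(-t^n\cdot D)=-(q/p)^n t^n \cdot D=(q/p)^n d_n$. The only real bookkeeping concern is keeping track of the signs and the scalar factors $q^n p^{-n}$, $p^{-m}$ through the two equivalent presentations of the bracket; once $\delta=1$ is secured, no genuine obstacle remains.
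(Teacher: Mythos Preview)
Your proposal is correct and follows essentially the same route as the paper: verify the hypotheses of \autoref{Thm:Bracket-Gen-Der} and \autoref{Prop:Bracket-Gen-Der-UFD} in this concrete setting (in particular $g=p-q\in\complex$, hence $\delta=1$), invoke the corollary to reduce the quasi-Jacobi identity to a genuine Hom-Jacobi identity with twist $\alpha=\mathrm{id}+\overline{\sigma\tau^{-1}}$, and then specialize \eqref{bracket-def}/\eqref{interior-bracket} to the basis $d_n=-t^n\cdot D$. The paper carries out the last step by a short explicit calculation on monomials rather than citing \eqref{interior-bracket}, but the argument is otherwise identical.
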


It gives an example of a $(p,q)$-deformation of Witt algebra. When $p=1$ we recover the $q$-deformed Witt algebra defined in \cite{HLS06} and for $p=q=1$, we obtain the usual Witt algebra.

\bigskip

\begin{remark}
The presentation of $(p,q)$-deformations of the Witt algebra by generators $\{d_n\}_{n \in \intg}$ and relations $[d_n,d_m] = \left( \frac{[n]_{p,q}}{p^n} - \frac{[m]_{p,q}}{p^m} \right) d_{n+m}$ with $(p,q)$-numbers $[n]_{p,q} = \frac{p^n-q^n}{p-q}$ is similar to the presentation of Witt $q$-deformations with generators $\{d_n\}_{n \in \intg}$ and relations $[d_n,d_m] = (\{n\}_q - \{m\}_q) d_{n+m}$ with $q$-numbers $\{n\}_q = \frac{1-q^n}{1-q}$.

Algebraically, the identities obtained come from the fact that
\begin{equation} \label{pq-q-numbers}
[n]_{p,q} = \frac{p^n-q^n}{p-q} = \frac{1-(q/p)^n}{1-q/p} \frac{p^n}{p} = \frac{p^n}{p} \{n\}_{q/p} \qquad \Rightarrow \qquad \frac{[n]_{p,q}}{p^n} = \frac{1}{p} \{n\}_{q/p},
\end{equation}
so replacing $r$-numbers $\{n\}_r$ by $(p,q)$-numbers $[n]_{p,q}$ with $r = q/p$ formally gives similar relations.

However, $q$-deformations of the Witt algebra were defined using $\sigma$-derivations, but the $(p,q)$-deformations arise by the \autoref{Thm:Bracket-Gen-Der} which gives a quasi-Lie structure on $(\tau,\sigma)$-derivations.
\end{remark}

Since the endomorphisms $\tau$ and $\sigma$ in this example are just the multiplications by $p$ or $q$, the $(p,q)$-deformations of Witt algebra are isomorphic to $q/p$-deformations of Witt algebra (as Hom-Lie algebras).
\begin{prop} \label{Prop:Wpq-iso-Wr}
The Hom-Lie algebra
\[
\left(W_{p,q},\quad [d_n,d_m]_{p,q} = \left( \frac{[n]_{p,q}}{p^n} - \frac{[m]_{p,q}}{p^m} \right) d_{n+m},\quad \alpha_{p,q}(d_n) = (1+(q/p)^n) d_n \right)
\]
is isomorphic to
\[
\left(W_{q/p},\quad [d_n,d_m]_{q/p} = \left( \{n\}_{q/p} - \{m\}_{q/p} \right) d_{n+m},\quad \alpha_{q/p}(d_n) = (1+(q/p)^n) d_n \right).
\]
\end{prop}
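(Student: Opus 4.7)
The plan is to use the algebraic identity already highlighted in the preceding remark, namely
\[
\frac{[n]_{p,q}}{p^n}=\frac{1}{p}\{n\}_{q/p},
\]
which immediately yields
\[
[d_n,d_m]_{p,q}=\left(\frac{[n]_{p,q}}{p^n}-\frac{[m]_{p,q}}{p^m}\right)d_{n+m}=\frac{1}{p}\bigl(\{n\}_{q/p}-\{m\}_{q/p}\bigr)d_{n+m}=\frac{1}{p}[d_n,d_m]_{q/p}.
\]
So the two brackets differ only by the overall scalar factor $1/p$, while the twist maps $\alpha_{p,q}$ and $\alpha_{q/p}$ are given by the \emph{same} formula $d_n\mapsto(1+(q/p)^n)d_n$ in the shared basis.

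Next I look for an isomorphism $\varphi\colon W_{p,q}\to W_{q/p}$ diagonal in the basis $\{d_n\}_{n\in\intg}$, writing $\varphi(d_n)=\lambda_n d_n$ with $\lambda_n\in\complex^*$. Intertwining the twists, $\varphi\circ\alpha_{p,q}=\alpha_{q/p}\circ\varphi$, is automatic because both sides act on $d_n$ by multiplication by $\lambda_n(1+(q/p)^n)$. Intertwining the brackets, $\varphi\circ[\cdot,\cdot]_{p,q}=[\cdot,\cdot]_{q/p}\circ(\varphi\otimes\varphi)$, reduces via the identity above to the cocycle condition
\[
\lambda_{n+m}=p\,\lambda_n\lambda_m\qquad\text{for all }n,m\in\intg.
\]
This is solved by the constant choice $\lambda_n=1/p$ (indeed $1/p=p\cdot(1/p)(1/p)$), so I would simply define $\varphi(d_n)=p^{-1}d_n$; more general solutions $\lambda_n=p^{-1}\mu^n$ for any $\mu\in\complex^*$ would work equally well and could be noted as a remark.

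Finally, since $\varphi$ is diagonal with nonzero entries it is a linear bijection, and the two verifications above confirm it is a morphism of Hom-Lie algebras, hence an isomorphism. There is no real obstacle here: once the identity \eqref{pq-q-numbers} is observed, the rescaling $\varphi(d_n)=p^{-1}d_n$ presents itself and the required compatibilities reduce to one-line checks.
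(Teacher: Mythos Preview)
Your proof is correct and follows essentially the same approach as the paper: both search for a diagonal isomorphism $d_n\mapsto c_n d_n$, observe that the twist intertwining is automatic, reduce the bracket condition via \eqref{pq-q-numbers} to the multiplicative relation $c_{n+m}=p\,c_n c_m$ (or its inverse, depending on the direction of the map), and solve it by the constant rescaling $c_n=p^{\pm 1}$, noting the one-parameter family of more general solutions. The only cosmetic difference is that the paper writes the map in the direction $W_{q/p}\to W_{p,q}$ with $\varphi(d_n)=p\,d_n$, whereas you write its inverse.
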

The generators $d_n = -t^n \cdot D$ in the two cases are not the same: for $W_{q/p}$ the map $D$ is a $\rho$-derivation, for $W_{p,q}$ the map $D$ is a $(\tau,\sigma)$-derivation. Since the definition of the generators $d_n$ does not occur hereinafter, we abusively keep the same notation.

\begin{proof}
Let $\varphi : W_{q/p} \to W_{p,q}$ be a linear map defined for $n \in \intg$ by $\varphi(d_n) = c_n d_n$, with $c_n \neq 0$. We have that $\varphi(\alpha_{q/p}(d_n)) = \alpha_{p,q}(\varphi(d_n))$ and the equation
\begin{gather*}
[\varphi(d_n),\varphi(d_m)]_{p,q} = \varphi([d_n,d_m]_{q/p})
\intertext{is equivalent, using identities \eqref{pq-q-numbers}, to}
c_n c_m \left( \frac{[n]_{p,q}}{p^n} - \frac{[m]_{p,q}}{p^m} \right) = \left(\{n\}_{q/p} - \{m\}_{q/p}\right) c_{n+m} \quad \Leftrightarrow \quad p c_{n+m} = c_n c_m.
\end{gather*}
Taking $n=m=0$, we get $p c_0 = c_0^2 \Rightarrow c_0 = p$. By induction, we obtain for $n \in \nat$
\[
c_n = \frac{c_1^n}{p^{n-1}} \quad \text{and}\quad c_{-n} = \frac{c_{-1}^{-n}}{p^{-n-1}}.
\]
This gives a family of isomorphisms between $W_{q/p}$ and $W_{p,q}$. For $c_1 = c_{-1} = p$, we have $\varphi(d_n) = p d_n$ is just the multiplication by $p$.
\end{proof}

\bigskip

An other bracket-product on $\mathcal{D}_{\tau,\sigma}(\A)$ can be defined by
\[ [f\cdot D, g \cdot D]' = (\sigma(f)D(g)-\sigma(g)D(f)) \cdot D \]
Since $\sigma \circ \tau = \tau \circ \sigma$, and $\sigma(p-q) = \tau(p-q) = p-q$ implies $\delta = 1$, we have $\sigma \circ D = D \circ \sigma$ and $\tau \circ D = D \circ \tau$ so all conditions for equation \eqref{Hom-Lie-commutation-condition} are fulfilled. Since the bracket product is also skew-symmetric it follows from the \autoref{Thm:Forced-Bracket} that $\mathcal{D}_{\tau,\sigma}(\A)$ is a Hom-Lie algebra, with bracket given on generators $d_n = -t^n \cdot D$ by
\begin{align*}
[d_n, d_m]' &= (q^m [n] - q^n [m]) d_{n+m} = (p^m [n] - p^n [m]) d_{n+m}.
\end{align*}
Extend maps $\alpha : \A \to \A$ on $\mathcal{D}_{\tau,\sigma}(\A)$ by $\overline{\alpha}(a \cdot D) = \alpha(a) \cdot D$. We can then state the following result.
\begin{theorem}
Let $\A = \complex[t,t^{-1}]$, $\tau(t)=pt,\ \sigma(t)=qt$ morphisms of $\A$ and $D = \dfrac{\tau-\sigma}{p-q}$ a $(\tau,\sigma)$-derivation of $\mathcal{D}_{\tau,\sigma}(\A)$. Then the $\complex$-linear space
\[
W_{p,q} = \bigoplus_{n \in \intg} \complex \cdot d_n,
\]
where $d_n = -t^n \cdot D$, can be endowed of a structure of Hom-Lie algebra with the bracket defined on generators by
\begin{align}
[d_n, d_m]' &= (q^m [n] - q^n [m]) d_{n+m} = (p^m [n] - p^n [m]) d_{n+m}.
\end{align}
and with the twist morphism $\overline{\sigma + \tau} : W_{p,q} \to W_{p,q}$ defined by
\begin{equation}
\overline{\sigma + \tau}(d_n) = (p^n+q^n)d_n.
\end{equation}
The bracket satisfies skew-symmetry $[d_m,d_n] = - [d_n,d_m]$ and the Hom-Jacobi identity
\begin{equation}
\circlearrowleft_{n,m,k} [(p^n+q^n)d_n,[d_m,d_k]']' = 0.
\end{equation}
\end{theorem}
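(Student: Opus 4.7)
The plan is to show that this statement is a direct application of Theorem~\ref{Thm:Forced-Bracket}, with the only substantive work being the bookkeeping to translate the abstract conclusion to the explicit formulas on the basis $\{d_n\}$. First I would verify the three hypotheses gathered in equation~\eqref{Hom-Lie-commutation-condition}. Commutativity $\sigma\tau=\tau\sigma$ is immediate because on the generator $t$ both compositions give $pqt$, and monomials extend by multiplicativity. For the compatibilities $D\circ\sigma=\delta\cdot\sigma\circ D$ and $D\circ\tau=\delta\cdot\tau\circ D$, I would compute on monomials: $D(\sigma(t^n))=D(q^n t^n)=q^n[n]t^n$ while $\sigma(D(t^n))=\sigma([n]t^n)=[n]q^n t^n$, and similarly for $\tau$. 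Hence both relations hold with $\delta=1$. This is consistent with the general formula $\delta=\sigma(g)/g=\tau(g)/g$ from the proposition just before the section, since here $g=p-q\in\complex^*$ is fixed by $\sigma$ and by $\tau$.

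With the hypotheses verified, Theorem~\ref{Thm:Forced-Bracket} immediately produces a Hom-Lie algebra structure on $\A\cdot D=\mathcal{D}_{\tau,\sigma}(\A)$ with bracket $[f\cdot D,g\cdot D]'=(\sigma(f)D(g)-\sigma(g)D(f))\cdot D$ and twist $\overline{\sigma+\tau}$. The next step is to express this on the basis $d_n=-t^n\cdot D$. A direct substitution, using $\sigma(t^n)=q^n t^n$ and $D(t^n)=[n]t^n$, gives
\[
[d_n,d_m]'=\big(q^n[m]-q^m[n]\big)(-t^{n+m})\cdot D=(q^m[n]-q^n[m])\,d_{n+m},
\]
which is the first displayed equality. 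The second equality $q^m[n]-q^n[m]=p^m[n]-p^n[m]$ I would check by a short algebraic computation: both sides expand, using $[k]=(p^k-q^k)/(p-q)$, to $(p^n q^m-p^m q^n)/(p-q)$.

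The twist morphism formula $\overline{\sigma+\tau}(d_n)=(p^n+q^n)d_n$ follows at once from $\overline{\sigma+\tau}(a\cdot D)=(\sigma+\tau)(a)\cdot D$ applied to $a=-t^n$. Skew-symmetry is visible on the explicit formula, and the Hom-Jacobi identity with twist $\overline{\sigma+\tau}$ is precisely the conclusion of Theorem~\ref{Thm:Forced-Bracket} rewritten on the basis. The main (and only mildly delicate) step is the verification that $D$ commutes with both $\sigma$ and $\tau$ with the \emph{same} constant $\delta=1$; I expect no genuine obstacle, since the entire construction is designed so that the commutative multiplication structure of $\A$ together with $\sigma|_\complex=\tau|_\complex=\mathrm{id}$ forces this.
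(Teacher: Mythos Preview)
Your proposal is correct and follows essentially the same route as the paper: the paper verifies (in the paragraph immediately preceding the theorem) that $\sigma\tau=\tau\sigma$ and that $\sigma(p-q)=\tau(p-q)=p-q$ forces $\delta=1$, so the hypotheses~\eqref{Hom-Lie-commutation-condition} of Theorem~\ref{Thm:Forced-Bracket} hold, and then simply reads off the bracket and twist on the basis $d_n=-t^n\cdot D$. Your version is slightly more explicit (checking the commutation on monomials and verifying the identity $q^m[n]-q^n[m]=p^m[n]-p^n[m]$ by expansion), but the argument is the same; just watch the sign in your intermediate step, since $\sigma(-t^n)D(-t^m)-\sigma(-t^m)D(-t^n)=(q^n[m]-q^m[n])\,t^{n+m}$ rather than $(q^n[m]-q^m[n])(-t^{n+m})$, though your final formula is correct.
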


\begin{remark}
This example is closely related to the previous one. Indeed, defining a morphism on $W_{p,q}$ by
\begin{align*}
\varphi : \bigoplus_{n \in \intg} \complex \cdot d_n &\to \bigoplus_{n \in \intg} \complex \cdot d_n \\
d_n & \mapsto p^n d_n
\end{align*}
we have that $(W_{p,q},[~,~]' = \varphi \circ [~,~],\overline{\sigma+\tau} = \varphi \circ \alpha)$ is the Hom-Lie algebra obtained by twisting (as in \autoref{Thm:TwistAssLie}) the previous $(p,q)$-Witt Hom-Lie algebra by the morphism $\varphi$.

Although these Hom-Lie algebras are twist-equivalent, they are not isomorphic just by a scale changing relation. Define
\begin{align*}
\varphi : (W_{p,q},[~,~],\alpha) &\to (W_{p,q},[~,~]',\alpha') \\
\varphi(d_n) &= c_n d_{\nu(n)}
\end{align*}
with $\nu : \intg \to \intg$ a permutation of the integers, and $c_n \neq 0$ for $n \in \intg$. Suppose that $\varphi$ is an isomorphism, then
\begin{gather}
[\varphi(d_n),\varphi(d_m)]' = \varphi([d_n,d_m]) \notag \\
\Leftrightarrow c_n c_m \left( p^{\nu(m)} [\nu(n)] - p^{\nu(n)} [\nu(m)] \right) d_{\nu(n)+\nu(m)} = \frac{1}{p^{n+m}} (p^m [n] - p^n [m]) c_{n+m} d_{\nu(n+m)}, \notag
\intertext{since these expressions are multiple of basis elements, we must have}
d_{\nu(n)+\nu(m)} = d_{\nu(n+m)} \Rightarrow \nu(n+m) = \nu(n)+\nu(m) \Rightarrow \nu(n) = n \nu(1) \notag
\intertext{and}
c_n c_m p^{n+m} (p^{\nu(n)}q^{\nu(m)} - p^{\nu(m)}q^{\nu(n)}) = c_{n+m} (p^n q^m - p^m q^n). \label{Witt-not-isomorphic}
\end{gather}
The isomorphism $\varphi$ also intertwine the twist maps which gives
\begin{gather}
\alpha'(\varphi(d_n)) = \varphi(\alpha(d_n)) \notag \\
\Leftrightarrow \cancel{c_n} (p^\nu(n) + q^{\nu(n)}) d_{\nu(n)} = (1+(q/p)^n) \cancel{c_n} d_{\nu(n)} \notag \\
\Leftrightarrow p^n (p^{\nu(n)} + q^{\nu(n)}) = p^n + q^n \label{Witt-not-isomorphic-twists}
\end{gather}
If $\nu \equiv id$ ($\nu(1)=1$), then equation \ref{Witt-not-isomorphic} gives
\[
c_n c_m p^{n+m} = c_{n+m}.
\]
Since for $n \in \nat,\ c_n \neq 0$, for $n=m=0$, we get $c_0 = 1$, and for $n=1,m=0$, we obtain $c_1 c_0 p = c_1$ which is impossible unless $p=1$. In this case, we recover for both algebras $W_{1,q} = W_q$, a $q$-deformation of Witt algebra. For $p \neq 1$, equations \eqref{Witt-not-isomorphic} and \eqref{Witt-not-isomorphic-twists} show that the two Hom-Lie algebras are not isomorphic by a mere scale change relation (with $\nu \equiv id$). An isomorphism which rescale and permutes the generators $d_n$ of $W_{p,q}$ must satisfy equations \eqref{Witt-not-isomorphic} and \eqref{Witt-not-isomorphic-twists} for all $n \in \intg$.
\end{remark}

\subsection{\texorpdfstring{$(p,q)$-deformations of $\mathfrak{sl}(2)$}{(p,q)-deformations of sl(2)}}

We consider now the same algebra $\A = \complex[t,t^{-1}]$ and morphisms $\tau,\sigma : \A \to \A$ uniquely defined by $\tau(t) = pt$ and $\sigma(t) = qt$ with $p,q \in \complex^*$, $p\neq q$; but this time, we choose $\partial = \dfrac{\tau - \sigma}{t(p-q)}$ as the generator of the $\A$-module $\mathcal{D}_{\tau,\sigma}(\A) = \A \cdot \partial$, with $\partial(t^n) = [n]t^{n-1}$, and we restrict the bracket
\begin{align*}
[a \cdot \partial,b \cdot \partial] & \coloneqq (\sigma\tau^{-1}(a) \cdot \partial) \circ \tau^{-1} \circ (b \cdot \partial) - (\sigma\tau^{-1}(b) \cdot \partial) \circ \tau^{-1} \circ (a \cdot \partial) \\
& = \big(\sigma\tau^{-1}(a) \partial\tau^{-1}(b) - \sigma\tau^{-1}(b) \partial\tau^{-1}(a)\big) \cdot \partial
\end{align*}
given by \autoref{Thm:Bracket-Gen-Der} to the space $S$ generated by
\[
e \coloneqq \partial \qquad f \coloneqq - t^2 \cdot \partial \qquad h \coloneqq - 2t \cdot \partial
\]
with $S = \operatorname{LinSpan}(e,f,h)$.

We have the following brackets on these elements
\begin{subequations} \label{brackets-efh}
\begin{align}
[h,e] &= [-2t \cdot \partial, \partial] = 2 \partial(p^{-1}t) \cdot \partial = 2p^{-1} \partial = 2p^{-1} e \\
[h,f] &= [-2t \cdot \partial, -t^2 \cdot \partial] = \big( 2qp^{-1}t \partial(p^{-2}t^2) - q^2p^{-2}t^2 \partial(2p^{-1}t) \big) \cdot \partial = 2qp^{-2}t^2 \cdot \partial = -2qp^{-2} f \\
[e,f] &= [\partial,-t^2 \cdot \partial] = -\partial(p^{-2}t^2) \cdot \partial = -p^{-2}(p+q)t \cdot \partial = \frac{p+q}{2p^2}h.
\end{align}
\end{subequations}

Since the gcd has changed in the choosen generator, the element $\delta$ also changes according to the \autoref{Rmk:BaseChangeRelation}. We have in this case $\delta = qp^{-1}$ and the Hom-Jacobi relation coming from the $(p,q)$-Witt Hom-Lie algebra writes
\begin{gather*}
\circlearrowleft_{e,f,h} [\overline{\sigma} \overline{\tau^{-1}}(e)+qp^{-1} e,[f,h]] \\
= [e,2qp^{-2}f] + [q^2 p^{-2}f,2p^{-1}e] + \underbrace{\left[q p^{-1}h,\frac{p+q}{2p^2}h\right]}_{=0} \\
+ qp^{-1} [e,2qp^{-2}f] + qp^{-1}[f,2p^{-1}e] + qp^{-1}\underbrace{\left[h,\frac{p+q}{2p^2}h\right]}_{=0} = 0.
\end{gather*}

\begin{remark} \label{Rmk:gen-efh-dn}
We can also use the generators $d_n = - t^n \cdot \partial$ of $W_{p,q}$. The elements $e,f,h$ can be written
\[
e = \partial = - d_0 \qquad f = -t^2 \cdot \partial = d_2 \qquad h = -2t \cdot \partial = 2 d_1.
\]
Since $\partial = \frac{1}{t} D$, the bracket in this case is $[d_n,d_m] = ([n]-[m]) d_{n+m-1}$, we can then recover the brackets of equations \eqref{brackets-efh}.
\end{remark}

We obtain then the following result.
\begin{theorem} \label{sl2pq}
Let $\A = \complex[t,t^{-1}]$, $\tau(t)=pt,\ \sigma(t)=qt$ morphisms of $\A$ and $\partial = \dfrac{\tau - \sigma}{t(p-q)}$ a $(\tau,\sigma)$-derivation of $\mathcal{D}_{\tau,\sigma}(\A)$. Then the space $S \subset \mathcal{D}_{\tau,\sigma}(\A)$ generated by $e \coloneqq \partial$, $f \coloneqq - t^2 \cdot \partial$, $h \coloneqq - 2t \cdot \partial$ is a Hom-Lie algebra, with brackets and twist morphism given by
\begin{equation*}
\begin{aligned}[l]
[h,e]_{p,q} &= 2p^{-1} e, \\
[h,f]_{p,q} &= -2qp^{-2} f, \\
[e,f]_{p,q} &= \dfrac{p+q}{2p^2} h,
\end{aligned}
\hspace{3cm}
\begin{aligned}[r]
\alpha_{p,q}(e) &= (1+qp^{-1})e, \\
\alpha_{p,q}(f) &= qp^{-1}(1+qp^{-1}) f, \\
\alpha_{p,q}(h) &= 2qp^{-1} h.
\end{aligned}
\end{equation*}
This Hom-Lie algebra will be noted $\mathfrak{sl}(2)_{p,q}$.
\end{theorem}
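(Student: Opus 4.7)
The plan is to realize $\mathfrak{sl}(2)_{p,q}$ as a Hom-Lie subalgebra of the ambient Hom-Lie algebra $(\A\cdot\partial,[~,~],\alpha_{p,q})$ obtained by applying \autoref{Thm:Bracket-Gen-Der} with generator $\partial$ instead of $D$. Concretely, since $\partial = D/t$ with $D = (\tau-\sigma)/(p-q)$, the generator has changed by the unit $t$, so by \autoref{Rmk:BaseChangeRelation} the associated element $\delta$ is multiplied by $\sigma(\tau^{-1}(t))/t = q/p$; as $\delta$ was equal to $1$ in the $D$-setting (since $\sigma|_{\complex}=\tau|_{\complex}=\mathrm{id}$ and $g=p-q$), the new value is $\delta = qp^{-1}$. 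Because $qp^{-1}$ is a scalar multiple of $1_{\A}$, the conditions $\sigma\tau^{-1}(\delta)=\delta$ and $\partial(\delta)=0$ of the corollary following \autoref{Thm:Bracket-Gen-Der} are both satisfied, so the six-term quasi-Jacobi identity collapses to a Hom-Jacobi identity and $\A\cdot\partial$ becomes a Hom-Lie algebra with twist $\alpha_{p,q} = \overline{\sigma\tau^{-1}} + qp^{-1}\,\mathrm{id}$.

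Next, I would compute the three brackets $[h,e]$, $[h,f]$, $[e,f]$ directly from the formula
\[
[a\cdot\partial, b\cdot\partial] = \bigl(\sigma\tau^{-1}(a)\,\partial(\tau^{-1}(b)) - \sigma\tau^{-1}(b)\,\partial(\tau^{-1}(a))\bigr)\cdot\partial,
\]
using $\partial(t^n)=[n]_{p,q}t^{n-1}$ and $\tau^{-1}(t^n)=p^{-n}t^n$. Each computation is mechanical and, as displayed in equations \eqref{brackets-efh} just above the statement, produces an element again lying in $\complex\cdot e \oplus \complex\cdot f \oplus \complex\cdot h$. This closure of $S$ under the bracket shows that $S$ is a Lie-type subspace, and the output coefficients are exactly those stated in the theorem. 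Alternatively, one may use the observation of \autoref{Rmk:gen-efh-dn} identifying $e=-d_0$, $h=2d_1$, $f=d_2$ in $W_{p,q}$ with $D$ replaced by $t\partial$, which converts the bracket into $[d_n,d_m]=([n]-[m])d_{n+m-1}$ and makes the same computation a one-liner.

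Then I would verify the stated formulas for $\alpha_{p,q}$ on the three generators by applying $\alpha_{p,q}(a\cdot\partial)=\sigma(\tau^{-1}(a))\cdot\partial + qp^{-1}\,a\cdot\partial$: for example $\alpha_{p,q}(-t^2\cdot\partial) = -(q/p)^2 t^2\cdot\partial + qp^{-1}(-t^2\cdot\partial) = qp^{-1}(1+qp^{-1})f$. These three calculations both match the claimed formulas and show that $S$ is stable under $\alpha_{p,q}$.

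Finally, since $S$ is closed under both the bracket and the twist, it inherits skew-symmetry and the Hom-Jacobi identity from $(\A\cdot\partial, [~,~], \alpha_{p,q})$, so $(S,[~,~]_{p,q},\alpha_{p,q})$ is a Hom-Lie algebra. I do not anticipate any real obstacle: the substance of the proof is already contained in \autoref{Thm:Bracket-Gen-Der}, and what remains is a bookkeeping verification that the restriction to $\mathrm{LinSpan}(e,f,h)$ is well-defined, together with the tracking of the correct value of $\delta$ under the change of generator $D \mapsto \partial$. If desired, one can also check the Hom-Jacobi identity by hand on the triple $(e,f,h)$ as a sanity check, as sketched in the display immediately preceding the theorem, where the $h$-$h$ brackets vanish by skew-symmetry and the remaining terms cancel in cyclic pairs.
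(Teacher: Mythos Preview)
Your proposal is correct and follows essentially the same approach as the paper: the paper also restricts the bracket of \autoref{Thm:Bracket-Gen-Der} (with generator $\partial$) to $S$, computes the three brackets exactly as in \eqref{brackets-efh}, tracks $\delta=qp^{-1}$ via \autoref{Rmk:BaseChangeRelation}, and verifies the (Hom-)Jacobi identity on the triple $(e,f,h)$ directly. The only cosmetic difference is that you invoke the corollary to \autoref{Thm:Bracket-Gen-Der} to collapse the six-term quasi-Jacobi identity to a three-term Hom-Jacobi identity on the ambient module before restricting, whereas the paper writes out the six-term identity on $(e,f,h)$ and checks the cancellation by hand; since $\delta$ is a scalar these are the same computation.
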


For $p=1$ ($\tau = id$), we recover the Jackson $\mathfrak{sl}(2)_q$ Hom-Lie algebra mentioned in \cite{LS07}, and for $p=q=1$, we obtain the usual $\mathfrak{sl}(2)$ Lie algebra.

Taking $p=1$ in the preceding theorem, the Jackson $\mathfrak{sl}(2)_q$ Hom-Lie algebra can be presented as the space $\tilde{S} \subset \mathcal{D}_\sigma(\A)$ generated by $e \coloneqq \partial$, $f \coloneqq - t^2 \cdot \partial$, $h \coloneqq - 2t \cdot \partial$, where this time $\partial = \dfrac{id - \sigma}{t(1-q)}$ is a $\sigma$-derivation, with brackets and twist morphism given by
\begin{equation*}
\begin{aligned}[l]
[h,e]_q &= 2 e, \\
[h,f]_q &= -2q f, \\
[e,f]_q &= \dfrac{1+q}{2} h,
\end{aligned}
\hspace{3cm}
\begin{aligned}[r]
\alpha_q(e) &= (1+q)e, \\
\alpha_q(f) &= q(1+q) f, \\
\alpha_q(h) &= 2q h.
\end{aligned}
\end{equation*}

\begin{prop}
The Hom-Lie algebra $\mathfrak{sl}(2)_{p,q}$ is isomorphic to $\mathfrak{sl}(2)_{q/p}$.
\end{prop}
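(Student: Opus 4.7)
The plan is to exhibit an explicit diagonal isomorphism between the two three-dimensional Hom-Lie algebras. First I would write out the bracket and twist data of $\mathfrak{sl}(2)_{q/p}$ by setting $p=1$ (and replacing $q$ by $q/p$) in the formulas of \autoref{sl2pq}:
\[
[h,e]_{q/p} = 2e, \qquad [h,f]_{q/p} = -2(q/p)f, \qquad [e,f]_{q/p} = \tfrac{1+q/p}{2}h,
\]
with $\alpha_{q/p}(e) = (1+q/p)e$, $\alpha_{q/p}(f) = (q/p)(1+q/p)f$, $\alpha_{q/p}(h) = 2(q/p)h$. The key observation, which really drives the whole argument, is that the twist maps $\alpha_{p,q}$ and $\alpha_{q/p}$ act by \emph{identical} scalars on the basis vectors $e,f,h$. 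So any diagonal linear map $\varphi : \mathfrak{sl}(2)_{q/p} \to \mathfrak{sl}(2)_{p,q}$ given by $\varphi(e) = \lambda_e e$, $\varphi(f) = \lambda_f f$, $\varphi(h) = \lambda_h h$ will automatically commute with the twists.

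Next I would find the constraints on $\lambda_e,\lambda_f,\lambda_h$ coming from $\varphi([x,y]_{q/p}) = [\varphi(x),\varphi(y)]_{p,q}$. Rewriting $\tfrac{p+q}{2p^2} = \tfrac{1}{p} \cdot \tfrac{1+q/p}{2}$ and $2qp^{-2} = \tfrac{2}{p}(q/p)$, the three bracket relations produce
\[
\lambda_h = p, \qquad \lambda_h = p, \qquad \lambda_e\lambda_f = p\,\lambda_h = p^2,
\]
so the choice $\lambda_e = \lambda_f = \lambda_h = p$ (or for instance $\lambda_e = 1,\ \lambda_f = p^2,\ \lambda_h = p$) gives a family of isomorphisms. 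Any such $\varphi$ is a linear bijection since all scalars are nonzero.

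Finally I would check that this $\varphi$ is indeed a morphism of Hom-Lie algebras by verifying the two conditions $\varphi \circ \alpha_{q/p} = \alpha_{p,q} \circ \varphi$ (immediate from the coincidence of twists and the diagonal form of $\varphi$) and $\varphi \circ [\,,\,]_{q/p} = [\,,\,]_{p,q} \circ (\varphi \otimes \varphi)$ (a direct substitution using the scalars above). There is essentially no obstacle here: the only mildly subtle point is recognising that the twist scalars on $e,f,h$ already depend on $p$ and $q$ only through $q/p$, so no nontrivial rescaling is needed to match them; the bracket rescalings can then be absorbed into the coefficients $\lambda_e,\lambda_f,\lambda_h$.
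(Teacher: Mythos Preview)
Your proof is correct and follows essentially the same diagonal-ansatz approach as the paper: search for $\varphi(e)=\lambda_e e$, $\varphi(f)=\lambda_f f$, $\varphi(h)=\lambda_h h$, read off the three constraints from the brackets, and note that the twist condition is automatic since $\alpha_{p,q}$ and $\alpha_{q/p}$ act by identical scalars. In fact your constraint $\lambda_e\lambda_f = p^2$ is the right one; the paper's version of the $[e,f]$ computation drops the factor $(1+q/p)/2$ on the left-hand side, producing the erroneous constraint $c_0 c_2 = 2p^3/(p+q)$ and an explicit map $\varphi(f)=\tfrac{2p^2}{p+q}f$ that does not actually intertwine the $[e,f]$ brackets, so your execution is cleaner.
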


\begin{proof}
As in \autoref{Prop:Wpq-iso-Wr} and keeping in mind the expressions of \autoref{Rmk:gen-efh-dn} for $e,f,h$, we search an isomorphism $\varphi : \mathfrak{sl}(2)_r \to \mathfrak{sl}(2)_{p,q}$ of the form $\varphi(d_n) = c_n d_n$, with $c_n \neq 0$, for $n \in \{0,1,2\}$. We then obtain
\begin{align*}
\varphi([h,e]_r) &= [\varphi(h),\varphi(e)]_{p,q} & \Leftrightarrow && \quad -2 c_0 d_0 &= -2 c_0 c_1 p^{-1} d_0 \quad && \Leftrightarrow & c_1 &= p \\
\varphi([h,f]_r) &= [\varphi(h),\varphi(f)]_{p,q} & \Leftrightarrow && \quad -2 r c_2 d_2 &= -2 c_1 c_2 q p^{-2} d_2 \quad && \Leftrightarrow & r &= q/p \\
\varphi([e,f]_r) &= [\varphi(e),\varphi(f)]_{p,q} & \Leftrightarrow  && \qquad 2 c_1 d_1 &= c_0 c_2 \frac{p+q}{p^2} d_1 \quad && \Leftrightarrow & c_0 c_2 &= \frac{2p^3}{p+q},
\end{align*}
and $\varphi(\alpha_r(x)) = \alpha_{p,q}(\varphi(x))$ for $x = e,f,h$ only gives $r = q/p$. Taking $c_0 = p$ and $c_2 = \frac{2p^2}{p+q}$ for example, we get an isomorphism $\varphi : \mathfrak{sl}(2)_{q/p} \to \mathfrak{sl}(2)_{p,q}$ given by
\[
\varphi(e) = p e, \quad \qquad \varphi(f) = \frac{2p^2}{p+q} f, \quad \qquad \varphi(h) = p h.
\]

\end{proof}

\subsection{\texorpdfstring{Case of $(\sigma,\sigma)$-derivations}{Case of (sigma,sigma)-derivations}}

We keep the algebra $\A = \complex[t,t^{-1}]$ and morphism $\sigma : \A \to \A$ uniquely defined by $\sigma(t) = pt$ with $p \in \complex^*$. If we take $\tau = \sigma$, the $(\sigma,\sigma)$-derivation $\partial$ on $\mathcal{D}_{\sigma,\sigma}(\A)$ is well defined, given by $\partial(t^n) = [n]t^{n-1} = n (pt)^{n-1}$. The module $\mathcal{D}_{\sigma,\sigma}(\A)$ is still generated by $\partial$, because for any $D \in \mathcal{D}_{\sigma,\sigma}(\A)$ and $n \in \intg$, we have $D(t^n) = [n] t^{n-1} D(t)$ so $D = D(t) \cdot \partial$.

In this case, for the bracket defined for $f,g \in \A$ by
\begin{gather*}
[f \cdot \partial,g \cdot \partial] \coloneqq (f \cdot \partial) \circ (\sigma^{-1}(g) \cdot \sigma^{-1} \circ \partial) - (g \cdot \partial) \circ (\sigma^{-1}(f) \cdot \sigma^{-1} \circ \partial)
\intertext{and for generators of $\mathcal{D}_{\sigma,\sigma}(\A)$ as a $\complex$ linear space}
d_n \coloneqq - t^n \cdot \partial,
\end{gather*}
the bracket on generators writes
\[
[d_n,d_m] = d_n \circ \sigma^{-1} \circ d_m - d_m \circ \sigma^{-1} \circ d_n = \frac{n-m}{p} d_{n+m-1}
\]
and the \autoref{Thm:Bracket-Gen-Der} gives that $(W_{p,p},[~,~])$ is a Lie algebra, isomorphic to the classical Witt algebra $W = W_{1,1}$.
\begin{prop}
The linear map
\begin{align*}
\varphi : W &\to W_{p,p} \\
d_n & \mapsto p^{n+1} d_{n+1}.
\end{align*}
is a Lie algebra isomorphism.
\end{prop}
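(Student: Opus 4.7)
The plan is to check directly that $\varphi$ is a linear bijection compatible with the brackets on both sides. Bijectivity is immediate: $\varphi$ sends the basis element $d_n$ of $W$ to the nonzero scalar multiple $p^{n+1} d_{n+1}$ of the basis element $d_{n+1}$ of $W_{p,p}$, and the shift $n \mapsto n+1$ is a bijection $\intg \to \intg$; hence $\varphi$ is both injective and surjective on basis elements and extends to a $\complex$-linear isomorphism.

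For the bracket compatibility, I would recall that in the classical Witt algebra the relations read $[d_n,d_m]_W = (n-m)\, d_{n+m}$, while the previous computation in this section gives $[d_n,d_m]_{W_{p,p}} = \frac{n-m}{p}\, d_{n+m-1}$. Then I would compute the two sides of the morphism condition on generators. On one hand,
\[
\varphi([d_n,d_m]_W) = (n-m)\, \varphi(d_{n+m}) = (n-m)\, p^{n+m+1}\, d_{n+m+1}.
\]
On the other hand, using bilinearity and the bracket of $W_{p,p}$,
\[
[\varphi(d_n),\varphi(d_m)]_{W_{p,p}} = p^{n+m+2}\, [d_{n+1},d_{m+1}]_{W_{p,p}} = p^{n+m+2}\cdot \frac{(n+1)-(m+1)}{p}\, d_{n+m+1} = p^{n+m+1}(n-m)\, d_{n+m+1}.
\]
The two expressions coincide, so $\varphi$ intertwines the brackets on basis elements, hence everywhere by bilinearity.

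There is no real obstacle here; the only point requiring care is aligning the index shift $n \mapsto n+1$ with the drop from $n+m$ to $n+m-1$ in the $W_{p,p}$-bracket, and matching the resulting power of $p$: the factor $p^{(n+1)+(m+1)} = p^{n+m+2}$ pulled out of the bracket exactly cancels the $\frac{1}{p}$ in the structure constant to leave $p^{n+m+1}$, which is also the power of $p$ produced by $\varphi$ applied to $d_{n+m}$. Combined with the already established bijectivity, this yields the Lie algebra isomorphism $\varphi : W \to W_{p,p}$.
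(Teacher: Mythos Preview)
Your proof is correct. The paper actually states this proposition without proof, so there is nothing to compare against; your direct verification---checking that $\varphi$ bijects basis elements and that $\varphi([d_n,d_m]_W)=[\varphi(d_n),\varphi(d_m)]_{W_{p,p}}$ via the relation $[d_n,d_m]_{W_{p,p}}=\tfrac{n-m}{p}\,d_{n+m-1}$---is exactly the intended elementary check.
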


\begin{remark}
Note that if we choose $D = t \partial$ as generator of $\mathcal{D}_{\sigma,\sigma}(\A)$, the bracket writes
\[
[d_n,d_m] = \frac{n-m}{p} d_{n+m}
\]
and the isomorphism between $W$ and $W_{p,p}$ is the multiplication by $p$.

In this case, the Hom-Lie algebra given by \autoref{Thm:Forced-Bracket} has bracket relation
\[
[d_n,d_m] = (n-m)p^{n+m-1} d_{n+m}
\]
and is obtained by twisting the Lie algebra $(W_{p,p}, [d_n,d_m]=\frac{n-m}{p}d_{n+m},id)$ by the morphism
\begin{align*}
\varphi : W_{p,p} &\to W_{p,p} \\
d_n & \mapsto p^n d_n.
\end{align*}
\end{remark}

\bigskip

The situation for $\mathfrak{sl}(2)_{(p,p)}$ algebras is the same. Taking $q=p$ in the \autoref{sl2pq}, we obtain that $\mathfrak{sl}(2)_{(p,p)}$ is a Lie algebra with brackets
\[
[h,e] = 2 p^{-1}e, \quad \qquad [h,f] = - 2 p^{-1} f, \quad \qquad [e,f] = p^{-1}h
\]
isomorphic to the classical $\mathfrak{sl}(2)$ algebra.
\begin{prop}
The Lie algebra $\mathfrak{sl}(2)_{(p,p)}$ is isomorphic to $\mathfrak{sl}(2)$, the isomorphism is given by multiplication by $p$.
\end{prop}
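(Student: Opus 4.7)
The statement asserts the existence of an isomorphism between two 3-dimensional Lie algebras, both spanned (as $\complex$-vector spaces) by the abstract symbols $e$, $f$, $h$. Since the two algebras share the same underlying vector space, the plan is simply to verify that the $\complex$-linear map $\varphi \colon \mathfrak{sl}(2) \to \mathfrak{sl}(2)_{(p,p)}$ defined on the basis by
\[
\varphi(e) = p\,e, \qquad \varphi(f) = p\,f, \qquad \varphi(h) = p\,h
\]
intertwines the two brackets. Because $p \in \complex^\ast$, the map $\varphi$ is automatically a linear bijection, so once the compatibility with the brackets is checked, it is a Lie algebra isomorphism.

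The bracket relations to verify are the three defining relations of $\mathfrak{sl}(2)$, namely $[h,e]=2e$, $[h,f]=-2f$ and $[e,f]=h$. For each one I would compute the corresponding bracket in $\mathfrak{sl}(2)_{(p,p)}$ using the formulas obtained by setting $q=p$ in \autoref{sl2pq}, namely
\[
[h,e]_{p,p} = 2p^{-1}e, \qquad [h,f]_{p,p} = -2p^{-1}f, \qquad [e,f]_{p,p} = p^{-1} h,
\]
and then use bilinearity to extract the scalar factor $p^2$ coming from applying $\varphi$ to both arguments. For example,
\[
[\varphi(h),\varphi(e)]_{p,p} = p^2 [h,e]_{p,p} = p^2 \cdot 2p^{-1} e = 2p\,e = \varphi(2e) = \varphi([h,e]),
\]
and the two remaining relations are checked in the same way. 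The cancellation between the factor $p^2$ and the factor $p^{-1}$ appearing in each structure constant of $\mathfrak{sl}(2)_{(p,p)}$ produces exactly the factor $p$ needed on the right-hand side, which matches $\varphi$ applied to the classical structure constant.

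There is essentially no obstacle here: the content of the proposition is that the extra factor of $p^{-1}$ present in every structure constant of $\mathfrak{sl}(2)_{(p,p)}$ compared to $\mathfrak{sl}(2)$ can be absorbed by a uniform rescaling of the basis. Conceptually this is the same kind of phenomenon as in \autoref{Prop:Wpq-iso-Wr}, and it reflects the fact that for $\tau=\sigma$ the twist data degenerates and the resulting algebra is forced to be an ordinary Lie algebra (which was already noted just before the statement). No Hom-compatibility needs to be checked separately since both twists are the identity.
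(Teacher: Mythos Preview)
Your proposal is correct and matches the paper's approach exactly: the paper does not give a separate proof environment for this proposition, but the surrounding text records the bracket relations of $\mathfrak{sl}(2)_{(p,p)}$ and the diagram in \autoref{Sec:summarized-diagrams} confirms the isomorphism $e\mapsto pe,\ f\mapsto pf,\ h\mapsto ph$, which is precisely the verification you carry out.
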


We have $\partial(\tau(t^n)) = p \tau(\partial(t^n))$ and $\partial(\sigma(t^n)) = q \sigma(\partial(t^n))$. In this case with $p=q$, conditions \eqref{Hom-Lie-commutation-condition} are satisfied with $\delta = p = q$, and the \autoref{Thm:Forced-Bracket} gives a Hom-Lie algebra with brackets
\[
[h,e]' = 2 e, \quad \qquad [h,f]' = -2p^2f, \quad \qquad [e,f]' = ph
\]
and Hom-Jacobi relation (since $\overline{\tau} = \overline{\sigma}$, $\overline{\tau+\sigma} = 2 \overline{\sigma}$)
\begin{gather*}
[2\overline{\sigma}(e),[f,h]']' + [2\overline{\sigma}(f),[h,e]']' + [2\overline{\sigma}(h),[e,f]']' \\
= 2 \big( [e,2p^2f]' + [p^2 f,2e]' + \underbrace{[ph,ph]'}_{=0} \big) = 0.
\end{gather*}
This Hom-Lie algebra is the twist of the classical $\mathfrak{sl}(2)$ algebra by the morphism $\varphi$ given by
\[
\varphi(e) = e, \quad \qquad \varphi(f) = p^2 f, \quad \qquad \varphi(h) = ph.
\]

\bigskip

\begin{landscape}
\subsection{Deformations summarized in diagrams} \label{Sec:summarized-diagrams}

We can summarize the situation of the different Hom-Lie algebras obtained by $(p,q)$-deformations of the Witt algebra with the following diagram.

\begin{equation*}
\xymatrixcolsep{5pc}
\xymatrix{
\left( W_{q/p},\; [d_n,d_m] = \left( \{n\}_{q/p} - \{m\}_{q/p} \right) d_{n+m},\; \alpha(d_n) = (1+(q/p)^n)d_n \right) \ar@{.>}[r]^-{q=p}
\ar[dd]_{\text{Hom-Lie \enspace} \\ \rotatebox{90}{$\cong$}}^{d_n \mapsto p d_n} & (W,\quad [d_n,d_m] = (n-m) d_{n+m},\quad 2 id)
\ar[dd]_{\text{Lie \enspace} \\ \rotatebox{90}{$\cong$}}^{d_n \mapsto p d_n} \\
& \\
\left( W_{p,q},\quad [d_n,d_m] = \left( \frac{[n]}{p^n} - \frac{[m]}{p^m} \right) d_{n+m},\quad \alpha(d_n) = (1+(q/p)^n)d_n \right) \ar@{.>}[r]^-{q=p}
\ar@{~>}[dd]_{\text{twist-equivalent}}^{\varphi(d_n)=p^n d_n} &
\left( W_{p,p},\quad [d_n,d_m] = \frac{n-m}{p} d_{n+m},\quad 2 id \right)
\ar@{~>}[dd]_{\text{twist-equivalent}}^{\varphi(d_n)=p^n d_n} \\
& \\
\left( W_{p,q},\quad [d_n,d_m]' = \left( p^m [n] - p^n [m] \right) d_{n+m},\;\; \alpha'(d_n) = (p^n+q^n)d_n \right) \ar@{.>}[r]^-{q=p} &
\left( W_{p,p},\;\; [d_n,d_m]' = (n-m) p^{n+m-1} d_{n+m},\;\; \alpha'(d_n) = 2 p^n d_n \right)
}
\end{equation*}

\bigskip

Here the isomorphism on the upper right is a Lie algebra isomorphism between the Witt algebra $(W,\enspace [d_n,d_m] = (n-m) d_{n+m})$ and $(W_{p,p},\enspace [d_n,d_m] = \frac{n-m}{p} d_{n+m})$. \\

The situation for deformations of $\mathfrak{sl}(2)$ is slightly different, since the \autoref{Thm:Forced-Bracket} can't be applied unless $p=q$.
\begin{equation*}
\xymatrixcolsep{5pc}
\xymatrix{
\left( \mathfrak{sl}(2)_{q/p},\; [~,~]_{q/p},\; \alpha_{q/p} \right) \ar@{.>}[r]^-{q=p}
\ar[dd]_{\text{Hom-Lie \enspace} \\ \rotatebox{90}{$\cong$}}^{\txt{\small $e \mapsto p e$ \\ \small $f \mapsto \frac{2p^2}{p+q} f$ \\ \small $h \mapsto ph$}} & (\mathfrak{sl}(2),\quad [~,~])
\ar[dd]_{\text{Lie \enspace} \\ \rotatebox{90}{$\cong$}}^{\txt{\small $e \mapsto p e$ \\ \small $f \mapsto p f$ \\ \small $h \mapsto ph$}} \ar@{~>}[r]^-{\text{twist-equivalent}}
&
\left( \mathfrak{sl}(2)_{p,p},\quad [~,~]_{p,p}',\quad \tilde{\alpha} \right) \\
& \\
\left( \mathfrak{sl}(2)_{p,q},\quad [~,~]_{p,q},\quad \alpha_{p,q} \right) \ar@{.>}[r]^-{q=p} &
\left( \mathfrak{sl}(2)_{p,p},\quad [~,~]_{p,p} \right)
}
\end{equation*}
\end{landscape}

\subsection{\texorpdfstring{$(p,q)$-deformations of the Virasoro algebra}{(p,q)-deformations of the Virasoro algebra}}

Let $\A = \complex[t,t^{-1}]$, $\tau$ and $\sigma$ be the algebra endomorphisms on $\A$ satisfying $\tau(t) = pt,\ \sigma(t) = qt$, where $p,q \in \complex^*$, with $p \neq q$ and $q/p$ not a root of unity, and set $L = \mathcal{D}_{\tau,\sigma}(\A) = W_{p,q}$. Then $L$ can be given the structure of a Hom-Lie algebra $(L,\alpha)$ as described in \autoref{Sec:Witt-pq} \autoref{Thm:Witt-pq}.

As in the case of $q$-deformations of the Witt algebra extended into $q$-deformations of the Virasoro algebra exposed in \cite{HLS06}, we obtain the following results.

\begin{theorem}
Every non-trivial one-dimensional central extension of the Hom-Lie algebra $(\mathcal{D}_{\tau,\sigma}(\A),\alpha)$, where $\A = \complex[t,t^{-1}]$, is isomorphic to the Hom-Lie algebra $\operatorname{Vir}_{p,q} = (\hat{L}, \hat{\alpha})$, where $\hat{L}$ has basis $\{L_n, n \in \intg\} \cup \{\boldsymbol{c}\}$ and bracket relations
\begin{gather}
[\boldsymbol{c},\hat{L}]_{\hat{L}} \coloneqq 0, \\
[L_n,L_m]_{\hat{L}} \coloneqq \left( \frac{[n]}{p^n} - \frac{[m]}{p^m} \right) L_{n+m} + \delta_{n+m,0} \frac{(q/p)^{-n}}{6(1+(q/p)^n)} \frac{[n-1]}{p^{n-1}} \frac{[n]}{p^n} \frac{[n+1]}{p^{n+1}} \boldsymbol{c},
\end{gather}
and $\hat{\alpha} : \hat{L} \to \hat{L}$ is the endomorphism of $\hat{L}$ defined by
\begin{equation}
\hat{\alpha}(L_n) = (1+(q/p)^n) L_n, \qquad \hat{\alpha}(\boldsymbol{c}) = \boldsymbol{c}.
\end{equation}
Moreover, there exists a non-trivial central extension of $(\mathcal{D}_{\tau,\sigma}(\A),\alpha)$ by $(\complex,id_\complex)$.
\end{theorem}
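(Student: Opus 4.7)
My plan is to classify one-dimensional central extensions of $(L,\alpha) = (W_{p,q}, \alpha)$ by the abelian Hom-Lie algebra $(\complex, id_\complex)$ via \autoref{Thm:ExtHomLie-uniqueness} and \autoref{Thm:ExtHomLie-existence}: every such extension corresponds to a skew-symmetric bilinear form $g : L \times L \to \complex$ satisfying the Hom-$2$-cocycle condition $\circlearrowleft_{x,y,z} g(\alpha(x),[y,z]) = 0$, with two cocycles giving isomorphic extensions precisely when their difference is a coboundary. I would then solve this cocycle identity on the basis $\{d_n\}_{n\in\intg}$ of $L$, show that its solution space modulo coboundaries is one-dimensional, and identify a generator with the central term stated in the theorem.

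Setting $g_{n,m} := g(d_n,d_m)$ and inserting the bracket and twist from \autoref{Thm:Witt-pq}, the cocycle condition on the triple $(d_n,d_m,d_k)$ becomes
\[
(1+(q/p)^n)\!\left(\tfrac{[m]}{p^m}-\tfrac{[k]}{p^k}\right)\! g_{n,m+k} + \mathrm{cyclic}(n,m,k) = 0.
\]
Following the strategy of \cite{HLS06}, the first sub-goal is to prove that $g_{n,m}=0$ whenever $n+m\neq 0$: fixing $s = n+m \neq 0$ and letting $k$ vary, the identity becomes a linear recursion among the $g_{a,b}$ with $a+b=s$, whose scalar coefficients are non-zero because $q/p$ is not a root of unity (so $[n]_{p,q}\neq 0$ for $n\neq 0$). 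Induction then forces $g_{n,m}=0$ off the antidiagonal, leaving only $\phi(n) := g_{n,-n}$, with $\phi(-n)=-\phi(n)$.

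Specialising the cocycle identity to the triples $(n,m,-(n+m))$ yields a three-term functional equation for $\phi$. After absorbing the free parameters $\phi(\pm 1)$ into a coboundary (this merely rescales the generators $L_n$ and is inessential), I would verify by induction on $|n|$, using the addition formula $[n+m]_{p,q} = p^m[n]_{p,q}+q^n[m]_{p,q}$ and the identity $[-n]_{p,q}/p^{-n}=-[n]_{p,q}/q^n$, that the closed form
\[
\phi(n) = \frac{(q/p)^{-n}}{6(1+(q/p)^n)}\,\frac{[n-1]}{p^{n-1}}\,\frac{[n]}{p^n}\,\frac{[n+1]}{p^{n+1}}
\]
is the unique solution up to an overall scalar (the central charge). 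Existence and non-triviality of the extension then follow from \autoref{Thm:ExtHomLie-existence} applied to this $g$ together with the observation that $\phi(n)\neq 0$ for $|n|\geq 2$, so $g$ is not a coboundary. The principal obstacle will be the recursion-solving step: confirming that the above closed form genuinely satisfies the functional equation, and that its solution space modulo coboundaries is exactly one-dimensional, requires a patient algebraic manipulation with $(p,q)$-numbers, analogous to but more delicate than the corresponding $q$-Witt computation in \cite{HLS06}, because the twist factor $1+(q/p)^n$ and the asymmetry between $p$ and $q$ intertwine non-trivially with the Virasoro cubic.
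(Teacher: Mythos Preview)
Your proposal is correct and follows essentially the same route as the paper's own sketch: classify extensions via \autoref{Thm:ExtHomLie-uniqueness} and \autoref{Thm:ExtHomLie-existence}, start from an arbitrary section/cocycle, modify it (what you call ``absorbing into a coboundary'', what the paper phrases as ``modifying the section $s$ to $\tilde s$'') to kill the off-antidiagonal terms and the linear part of $\phi$, and then identify the remaining one-dimensional space of cocycles with the stated cubic in $(p,q)$-numbers.

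The only noteworthy difference is one of economy. The paper does not redo the $(p,q)$-recursion from scratch: it invokes identity~\eqref{pq-q-numbers}, namely $[n]_{p,q}/p^{n} = \tfrac{1}{p}\{n\}_{q/p}$, so that every structure constant of $(W_{p,q},\alpha)$ is literally a rescaling of the corresponding constant of $(W_{q/p},\alpha_{q/p})$ (cf.\ \autoref{Prop:Wpq-iso-Wr}). Hence the entire cocycle computation of \cite[Theorems~39--40]{HLS06} for $W_r$ carries over verbatim with $r=q/p$, and the $(p,q)$-Virasoro cocycle is obtained by a change of variables rather than by a fresh induction. Your direct approach is perfectly valid, but you can shorten the ``patient algebraic manipulation'' you anticipate by observing that the functional equation for $\phi$ is, after dividing through by the common factor $1/p$, identical to the $r$-deformed one already solved in \cite{HLS06}; this bypasses the need to re-verify the cubic closed form against the $(p,q)$-addition formulas.
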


\begin{proof}[Sketch of the proof]
The proof is done following the same lines that the proof of \cite[Theorems 39 and 40 p.356]{HLS06}. It uses uniqueness (\autoref{Thm:ExtHomLie-uniqueness}) and existence (\autoref{Thm:ExtHomLie-existence}) results for Hom-Lie algebras. The idea is to start from an arbitrary section $s : L \to \hat{L}$ and modify it so that the modified corresponding `$2$-cocycle' $g_{\tilde{s}} \in \bigwedge^2(L,\complex)$ has an expression as simple as possible. The generators $L_n$ of $\operatorname{Vir}_{p,q}$ are then defined to be the image of the generators $d_n$ of $L = W_{p,q}$ by the modified section $\tilde{s}$, and they satisfy the given equation due to properties of $g_{\tilde{s}}$. The formulas obtained are similar than for $q$-deformations because of identities \eqref{pq-q-numbers}.
\end{proof}

\subsection{\texorpdfstring{Another example of $(\tau,\sigma)$-derivation on Laurent polynomials}{Another example of (tau,sigma)-derivation on Laurent polynomials}} \label{Sec:another-ex}

We consider again the complex algebra $\A$ of Laurent polynomials in one variable $t$ \( \A = \complex[t,t^{-1}]\). This time, we consider $\tau$ and $\sigma$ endomorphisms of $\A$, defined by
\[ \sigma(t) = qt \quad \text{($q \in \complex$)}, \qquad \tau(t)=t^{-1}, \]
or, written on an element $f \in \A$
\[ \sigma(f(t)) = f(qt) \qquad \tau(f(t)) = f(t^{-1}). \]
The algebra $\A$ is a unique factorization domain and by \autoref{Thm:Rank-One-Module} the set of $(\tau,\sigma)$-derivations $\mathcal{D}_{\tau,\sigma}(\A)$ is a free $\A$-module of rank one generated by the mapping
\[
D = \frac{\tau - \sigma}{g},
\]
where $g = \gcd((\tau - \sigma)(\A))$. Since
\[
\frac{t^{-n} - q^n t^n}{t^{-1}-qt} = t^{-n+1}(1 + qt^2 + \dotsb + (qt^2)^{n-1}),
\]
we have that $(\tau - \sigma)(t) = t^{-1} - qt~|~t^{-n} - q^n t^n = (\tau - \sigma)(t^n)$. Hence $t^{-1} - qt~|~(\tau - \sigma)(f(t))$ for all $f \in \A$, and since $\gcd((\tau - \sigma)(\A))~|~(\tau - \sigma)(t)$, $t^{-1} - qt$ is associated to $\gcd((\tau - \sigma)(\A)$, so we can choose it as $\gcd$: $g \coloneqq t^{-1} - qt = \gcd((\tau - \sigma)(\A))$. Note that here, the $\gcd$ $g = t^{-1} - qt$ is not an unit of $\A$.

The $(\tau,\sigma)$-derivation $D$ acts then on a monomial $f(t) = t^n$ by the following formula:
\[ D(f(t)) = \left(\frac{\tau-\sigma}{g}\right)(f(t)) = \frac{f(t^{-1})-f(qt)}{g} = \frac{t^{-n} - q^n t^n}{t^{-1}-qt}. \]
The element $\delta$ of \autoref{Prop:Bracket-Gen-Der-UFD} is computed as
\[
\delta = \frac{\sigma\tau^{-1}(g)}{g} = \frac{\sigma(t-qt^{-1})}{t^{-1}-qt} = \frac{qt - t^{-1}}{t^{-1} - qt} = -1.
\]
The bracket of \autoref{Thm:Bracket-Gen-Der}
\[
[a \cdot \Delta,b \cdot \Delta]_{\tau,\sigma} = \Big( (\sigma \circ \tau^{-1})(a) (\Delta \circ \tau^{-1})(b) - (\sigma \circ \tau^{-1})(b) (\Delta \circ \tau^{-1})(a) \Big) \cdot \Delta
\]
gives on monomials
\begin{gather*}
[- t^n \cdot \Delta, - t^m \cdot \Delta]_{\tau,\sigma} = \Big( q^{-n} t^{-n}  \frac{t^m - q^{-m} t^{-m}}{t^{-1} - qt} - q^{-m} t^{-m} \frac{t^n - q^{-n}t^{-n}}{t^{-1} - qt} \Big) \cdot \Delta \\
= \frac{1}{t^{-1} - qt} \big( q^{-n} t^{m-n} - \cancel{q^{-n-m} t^{-n-m}} - q^{-m} t^{n-m} + \cancel{q^{-m-n}t^{-m-n}} \big) \\
= \frac{q^{-m}}{t^{-1} - qt} (- t^{n-m} \cdot \Delta) - \frac{q^{-n}}{t^{-1} - qt} (- t^{m-n} \cdot \Delta).
\end{gather*}
Setting $d_n \coloneqq - t^n \cdot \Delta$ as before as generators of $\mathcal{D}_{\tau,\sigma}(\A)$, we obtain a Hom-Lie algebra $(\mathcal{D}_{\tau,\sigma}(\A),[~,~],\alpha)$ with bracket defined as
\[
[d_n,d_m] = \frac{q^{-m}}{t^{-1} - qt} d_{n-m} - \frac{q^{-n}}{t^{-1} - qt} d_{m-n}
\]
and twisting map $\alpha = \overline{\sigma} \overline{\tau^{-1}} - id$ defined on generators by
\[
\alpha(d_n) = q^{-n} d_{-n} - d_n.
\]
The Hom-Jacobi identity writes
\[
\circlearrowleft_{n,m,k} [q^{-n} d_{-n} - d_n,[d_m,d_k]] = 0.
\]

\bigskip

\begin{remark}
To define the bracket of the \autoref{Thm:Bracket-Gen-Der} on $(\tau,\sigma)$-derivations, the map $\tau$ was needed to be invertible. We can define brackets on $\A \cdot \Delta$ without that assumption, but we obtain then restrictive conditions to get the properties wanted.

The following brackets are all obtained by inserting the map $\tau$ at various places in the original definition of the bracket for $\sigma$-derivations
\[
[a \cdot \Delta,b \cdot \Delta]_\sigma \coloneqq  (\sigma(a) \cdot \Delta) \circ (b \cdot \Delta) - (\sigma(b) \cdot \Delta) \circ (a \cdot \Delta)
\]
in order to recover this case when $\tau = id$.

\newcounter{case}
\setcounter{case}{1}

\begin{itemize}
\item $[a \cdot \Delta,b \cdot \Delta]_{\tau,\sigma}^{(0)} \coloneqq  (\sigma(a) \cdot \Delta) \circ (b \cdot \Delta) - (\sigma(b) \cdot \Delta)$. \\
It is a well-defined bracket if $\sigma(\Ann(\Delta)) \subset \Ann(\Delta)$.
\item $[a \cdot \Delta,b \cdot \Delta]_{\tau,\sigma}^{(s)} \coloneqq  (\sigma(a) \cdot \Delta) \circ (\tau(b) \cdot \Delta) - (\sigma(b) \cdot \Delta) \circ (\tau(a) \cdot \Delta)$. \\
It is a well-defined bracket if $\sigma(\Ann(\Delta)) \subset \Ann(\Delta)$ and $\tau(\Ann(\Delta)) \subset \Ann(\Delta)$.
\item $[a \cdot \Delta,b \cdot \Delta]_{\tau,\sigma}^{(\thecase)} \coloneqq  (\sigma(a) \cdot \tau \circ \Delta) \circ (b \cdot \Delta) - (\sigma(b) \cdot \tau \circ \Delta) \circ (a \cdot \Delta)$. \\
It is a well-defined bracket if $\sigma(\Ann(\Delta)) \subset \Ann(\Delta)$.
\addtocounter{case}{1}
\item $[a \cdot \Delta,b \cdot \Delta]_{\tau,\sigma}^{(\thecase)} \coloneqq  (\sigma(a) \cdot \Delta \circ \tau) \circ (b \cdot \Delta) - (\sigma(b) \cdot \Delta \circ \tau) \circ (a \cdot \Delta)$. \\
It is a well-defined bracket if $\sigma(\Ann(\Delta)) \subset \Ann(\Delta)$.
\addtocounter{case}{1}
\item $[a \cdot \Delta,b \cdot \Delta]_{\tau,\sigma}^{(\thecase)} \coloneqq  (\sigma(a) \cdot \Delta) \circ (b \cdot \tau \circ \Delta) - (\sigma(b) \cdot \Delta) \circ (a \cdot \tau \circ \Delta)$. \\
It is a well-defined bracket if $\sigma(\Ann(\Delta)) \subset \Ann(\Delta)$.
\addtocounter{case}{1}
\item $[a \cdot \Delta,b \cdot \Delta]_{\tau,\sigma}^{(\thecase)} \coloneqq  (\sigma(a) \cdot \Delta) \circ (b \cdot \Delta \circ \tau) - (\sigma(b) \cdot \Delta) \circ (a \cdot \Delta \circ \tau)$. \\
It is a well-defined bracket if $\sigma(\Ann(\Delta)) \subset \Ann(\Delta)$.
\end{itemize}

For the bracket labelled $(0)$ to be an internal law of composition on $\A \cdot \Delta$, the morphisms $\tau$ and $\sigma$ must satisfy
\begin{align*}
& \tau^2 = \tau \qquad \text{and} \qquad \tau \circ \sigma = \sigma \\
\text{or} \qquad & \tau^2 = \sigma \qquad \text{and} \qquad \tau \circ \sigma = \tau.
\end{align*}

In the case of the bracket labelled $(s)$ we need the previous condition and the following one :
\begin{equation} \label{eq-S-null}
\forall\, a,b \in \A, \qquad \sigma(a) \sigma(\tau(b)) - \sigma(b) \sigma(\tau(a)) = 0.
\end{equation}

In the cases $(1)$ and $(2)$, we need condition \eqref{eq-S-null} and also that $\tau^2=id$.

The bracket labelled $(3)$ only needs that $\tau^2=id$ to be an internal law of composition, whereas the bracket labelled $(4)$ needs that $\tau^3 = \tau$ and $\tau \circ \sigma \circ \tau = \sigma$, or $\tau \circ \sigma \circ \tau = \tau$ and $\tau^3 = \sigma$.
\end{remark}

\bibliographystyle{abbrv}
\nocite{*}
\bibliography{biblio-st-der}


\end{document}